\newcommand{\dif}{\mathrm{d}}
\newcommand{\bP}{\mathbb{P}}
\newcommand{\bG}{\mathbb{G}}
\newcommand{\bE}{\mathbb{E}}
\newcommand{\bR}{\mathbb{R}}
\newcommand{\bN}{\mathbb{N}}
\newcommand{\bS}{\mathbb{S}}
\newcommand{\eps}{\varepsilon}
\newcommand{\cS}{\mathcal{S}}
\newcommand{\cF}{\mathcal{F}}
\newcommand{\cG}{\mathcal{G}}
\theoremstyle{remark}
\newtheorem{remark}{Remark}[section]
\theoremstyle{plain}
\newtheorem{proposition}{Proposition}[section]
\theoremstyle{plain}
\newtheorem{lemma}{Lemma}[section]
\theoremstyle{plain}
\newtheorem{theorem}{Theorem}[section]
\theoremstyle{plain}
\theoremstyle{definition}
\newcommand{\Dkonv}{\stackrel{\mathcal{D}}{\longrightarrow}}
\begin{document}

\title{On a surprising behavior of the likelihood ratio test in non-parametric mixture models}
\author{Yan Zhang, Stanislav Volgushev}
\institute{University of Toronto}

\begin{abstract}
We study the likelihood ratio test in general mixture models where the base density is parametric, the null is a known fixed mixing distribution, and the alternative is a general mixing distribution supported on a bounded parameter space. For Gaussian location mixtures and Poisson mixtures, we show a surprising result: the non-parametric likelihood ratio test statistic converges to a tight limit if and only if the null distribution is a finite mixture, and diverges to infinity otherwise. We further demonstrate that the likelihood ratio test diverges for a fairly general class of distributions when the null mixing distribution is not finitely discrete. 
\end{abstract}

\maketitle

\section{Introduction}

Likelihood ratio tests (LRT) are among the most classical and widely used tools in statistical inference. The properties of such test are well understood in regular parametric models, and a vast literature exists on likelihood ratio tests in irregular models where the classical $\chi^2$ asymptotics can fail \citep{brazzale2024likelihood}. An important class of such irregular models that has been widely used in practice and has thus attracted substantial attention from the theoretical community are mixture models, see \cite{chen2023statistical} for a recent textbook treatment and \cite{titterington1985statistical,mclachlan2000finite,everitt2013finite} for classical textbooks. 

To formally introduce this model class, let $\Theta \subseteq \mathbb{R}^d$ denote the parameter space, and consider a family of densities $\{p_\theta : \theta \in \Theta\}$ on $\mathbb{R}^p$, each defined with respect to a common reference measure $\mu$ on $\mathbb{R}^p$. Denote by $\cG$ the set of all probability distributions supported on $\Theta$. For $g \in \cG$, let
\[
f_g(x) \coloneqq \int_\Theta p_\theta(x) \dif g(\theta). 
\]
The distributions $g$ are often called \textit{mixing distributions}. 

If $g$ is discrete with a finite, known number of components, say $K$, such models are known as \textit{finite mixtures} or \textit{$K$-component mixtures}. Classical regularity conditions fail in finite mixtures, and the properties of LRT in mixture models remained an enigma for a long time \citep{lindsay1995mixture}. Even the problem of LRT asymptotics for testing one versus two components in Gaussian location mixtures was only recently resolved in \cite{dacunha1997testing, dacunha1999testing, chen2001likelihood}. 

Early contributions to LRT properties for mixtures are \cite{GhoshSen1985, hartigan1985failure}, with later work by \cite{BickelChernoff1993, dacunha1997testing, dacunha1999testing, chen2001likelihood, ciuperca2002likelihood, liu2003testing, liu2004asymptotics, azais2006asymptotic} among many others; see \cite{chen2023statistical} and \cite{brazzale2024likelihood} for additional references. For the specific case of Gaussian location mixtures, the findings in the above literature can be summarized as follows: the LRT for one versus two components diverges to infinity when no restrictions are placed on the locations of the mass points of $g$ under the alternative \citep{hartigan1985failure, BickelChernoff1993, liu2004asymptotics, azais2006asymptotic} and converges to the supremum of a certain process when $g$ is restricted to have compact support \citep{dacunha1997testing, chen2001likelihood}. 

The non-parametric likelihood ratio test, where no restrictions are placed on $\cG$, is even more challenging to analyze and remains poorly understood. The most basic form of such a test is
\begin{equation}\label{eq:defLRT}
L_n(\cG,g_0) \coloneqq \sup_{g \in \cG} \ell_n(f_{g}) - \ell_n(f_{g_0}),
\end{equation}
where, for an i.i.d. sample $X_1,\dots, X_n$ from $f_{g_0}$, $\ell_n(f) \coloneqq \sum_{i=1}^n \log f(X_i)$ denotes the log-likelihood function and $g_0$ is a known, fixed distribution.

The findings in \cite{hartigan1985failure} imply that $L_n(\cG,g_0)$ diverges to infinity when $\Theta = \bR$, $p_\theta(\cdot) = \phi(\cdot-\theta)$ with $\phi$ denotes the standard normal pdf, and $g_0$ is a point mass at zero. The speed of divergence and additional details were subsequently studied in \cite{jiang2016generalized, jiang2019rate} among others. 

Similarly to the finite mixture case, the divergence described above is due to the fact that $\bR$ is not compact. Assuming that the parameter space $\Theta$ is bounded was shown to avoid issues with diverging LRT for Gaussian and Poisson mixtures, even for non-parametric likelihood ratio tests. Several authors provide abstract results on expansions and the convergence of likelihood ratio tests under high-level Donsker type conditions on certain function classes, which can incorporate non-parametric alternatives. This includes the work by \cite{gassiat2002likelihood, liu2003asymptotics, azais2009likelihood}. However, those conditions are abstract and verifying them even for very simple null models is very challenging. 

\cite{azais2009likelihood} verify those high level conditions in several non-parametric mixture models. 
They prove that $L_n(\mathcal{G}, g_0)$ converges in distribution in Hartigan's setting---i.e., when $g_0$ has a single point mass---provided that $\Theta$ is restricted to a compact interval. 
\cite{azais2009likelihood} provide an explicit expansion for $L_n(\cG,g_0)$ and show that the limit is given by the square of the supremum of the positive part of a certain Gaussian process. They prove similar results for Poisson mixtures, still assuming that $g_0$ is a degenerate distribution with a single point mass, and also study Binomial mixtures (in this case, $g_0$ is allowed to be more general). As key application of their results, \cite{azais2009likelihood} drive the asymptotic distribution of test for homogeneity in Gaussian, Poisson and Binomial mixtures where the null is that the sample is generated from $f_{g_0}$ with $g_0$ corresponding to a degenerate point mass at an unknown location while the alternative is that $g_0$ is a general distribution supported on a known, bounded interval.

Given the existing literature, it seems natural to conjecture that the likelihood ratio test in Gaussian and Poisson mixtures will converge in distribution even if the null is not a point mass, as long and we restrict the parameter space to be bounded. However, to the best of our knowledge, no results on the asymptotic behavior on the non-parametric LRT exist in Gaussian location mixtures or Poisson mixtures beyond what was proved in \cite{azais2009likelihood}. The proof technique in \cite{azais2009likelihood} uses the point mass structure of the null very explicitly, and does not extend beyond this particular case. 

The main finding in our paper is that the natural conjecture above is wrong. Specifically, we prove that the non-parametric LRT in Gaussian location mixtures and Poisson mixtures with bounded parameter space  converges if and only if $g_0$ is finitely discrete, and diverges to infinity otherwise. Intuitively, this is because finitely discrete $g_0$ are in a sense extremal points in the space of distributions and can be approached from fewer directions under the alternative than general $g_0$. 

\section{Main results} \label{sec:main}

Before presenting our main results, we introduce some additional notation. Throughout, we will use $\bN$ to denote the set of non-negative integers including zero. We will also write $[d]$ for $\{1,\dots,d\}$. For the class of mixing distributions $\cG$ as defined in the introduction, we will often write $\cF \coloneqq \{f_g: g \in \cG\}$ for the class of the resulting marginal distributions. For later ease of reference, we also formally define the Gaussian location mixture and Poisson mixture model as follows.
\begin{enumerate} \renewcommand{\theenumi}{(GM)}
\renewcommand{\labelenumi}{\theenumi}
\item \label{(GM)} $\Theta \subseteq \bR$ is bounded. We have $p_\theta(x) = \phi(x-\theta), x\in \bR$ where $\phi$ denotes the standard normal density and the base measure $\mu$ is Lebesgue measure on $\bR$.
\renewcommand{\theenumi}{(PM)}
\renewcommand{\labelenumi}{\theenumi}
\item \label{(PM)} $\Theta \subseteq (0,\infty)$ is bounded. We have $p_\theta(k) = \tfrac{\theta^ke^{-\theta}}{k!}, k \in \bN$ and the base measure $\mu$ is counting measure on $\bN$.
\end{enumerate}

Both \ref{(PM)} and \ref{(GM)} are important models in practice and have been studied extensively. For such models, we obtain a complete characterization of the behavior of $L_n(\cG,g_0)$ in terms of convergence/divergence.

\begin{theorem}\label{thm:MAIN}
Assume either \ref{(GM)} or \ref{(PM)}. 
\begin{enumerate}
\item If $g_0$ is discrete with a finite number of mass points, 
\[
L_n(\cG,g_0) \Dkonv \frac{1}{2}\sup_{s \in \cS} [(\bG(s))_+]^2, 
\]
where $x_+\coloneqq\max\{x, 0\}$ and $\bG(s)$ is a centered Gaussian process on $\cS$ with covariance structure 
\[
\bE[\bG(s_1)\bG(s_2)] = \bE[s_1(X)s_2(X)], \quad X \sim f_{g_0}, 
\]
and the score set $\cS$ is defined in~\eqref{eq:defS} below.
\item If $g_0$ is not finitely discrete\footnote{i.e. it is not a discrete distribution with a finite number of mass points}, $L_n(\cG,g_0)$ diverges to infinity in probability.
\end{enumerate}
\end{theorem}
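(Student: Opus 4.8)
The plan is to recast $L_n(\cG,g_0)$ as a penalized empirical process indexed by the normalized score set $\cS$, and to show that the dichotomy in the theorem is exactly the dichotomy between $\cS$ being a $f_{g_0}$-Donsker class (convergence) and failing to be one (divergence). Writing $d_g \coloneqq f_g/f_{g_0}-1$ and $s_g \coloneqq d_g/\|d_g\|_{L^2(f_{g_0})}$, each element of $\cS$ satisfies $\bE_{g_0}[s]=0$ and $\bE_{g_0}[s^2]=1$, which already matches the stated covariance of $\bG$. Along the one-dimensional submodel $f_{(1-u)g_0+ug}$, a second-order Taylor expansion of the log-likelihood gives, after optimizing the step $u$, a contribution $\tfrac12[(\bG_n(s_g))_+]^2+R_n$, where $\bG_n(s)=n^{-1/2}\sum_i s(X_i)$ and $R_n$ collects higher-order terms. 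Thus $L_n$ equals, up to remainders, $\tfrac12\sup_{s\in\cS}[(\bG_n(s))_+]^2$, and everything hinges on the behaviour of $\sup_{s\in\cS}\bG_n(s)$.

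For part (i) I would verify the high-level conditions underlying the expansions of \cite{gassiat2002likelihood, liu2003asymptotics, azais2009likelihood}: that $\cS$ is $f_{g_0}$-Donsker with square-integrable envelope, and that the quadratic approximation holds \emph{uniformly} over $\cG$. When $g_0=\sum_{j=1}^K\pi_j\delta_{\theta_j}$ is finitely discrete, the bounded directions (reweighting the existing atoms) span only a $(K-1)$-dimensional space, while the remaining directions---adding mass at a new location $\theta'$---form the one-parameter family $\{\phi(\cdot-\theta')/f_{g_0}-1:\theta'\in\Theta\}$ (resp.\ its Poisson analogue), which has a square-integrable envelope because $\Theta$ is bounded and is of finite uniform-entropy (VC-type) index. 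General $g$ produce convex combinations of these, so $\cS$ is Donsker, and a continuous-mapping/argmax theorem then yields $L_n\Dkonv\tfrac12\sup_{s\in\cS}[(\bG(s))_+]^2$. The genuinely technical step here is the uniform remainder bound: for $g$ far from $g_0$ the log-likelihood is truly nonlinear, and one must control $R_n$ over all of $\cG$ by a Hellinger-bracketing and peeling argument in the spirit of van de Geer.

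For part (ii) the strategy is constructive. When $g_0$ is not finitely discrete it has either a continuous part or infinitely many atoms, and in either case the mixing operator $\eta\mapsto f_\eta=\int p_\theta\,\dif\eta(\theta)$ is injective with infinite-dimensional range. Perturbations of the form $\dif\eta_j=\psi_j\,\dif g_0$ with $\int\psi_j\,\dif g_0=0$ give scores $s_{\eta_j}=f_{\eta_j}/f_{g_0}$ satisfying $|s_{\eta_j}|\le\|\psi_j\|_\infty$, hence bounded with all moments finite; applying Gram--Schmidt (in $L^2(f_{g_0})$) to $N$ linearly independent bounded $\psi_j$ produces, for every fixed $N$, scores $s_1,\dots,s_N\in\cS$ that are exactly orthonormal. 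For such a fixed family the multivariate CLT gives $(\bG_n(s_1),\dots,\bG_n(s_N))\Dkonv N(0,I_N)$, so for every level $M$,
\[
\liminf_{n\to\infty}\bP\Big(\sup_{s\in\cS}\bG_n(s)\ge M\Big)\ge\sup_N\bP\Big(\max_{j\le N}Z_j\ge M\Big)=1, \quad Z_j \text{ i.i.d. } N(0,1),
\]
whence $\sup_{s\in\cS}\bG_n(s)\to\infty$ in probability. Running the one-dimensional submodels along these fixed directions---where, for fixed $N$, $R_n\to0$ since each score has finite third moment---gives $L_n\ge\tfrac12[\max_{j\le N}(\bG_n(s_j))_+]^2-o_{\bP}(1)$, and therefore $L_n\to\infty$. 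This simultaneously explains the boundary: for finitely discrete $g_0$ no infinite bounded orthonormal family exists inside $\cS$, so $\sup_{s\in\cS}\bG_n(s)$ remains tight.

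The step I expect to be the main obstacle is the score construction in part (ii): the Gaussian (resp.\ Poisson) kernel is strongly smoothing, so building many orthonormal scores while keeping them uniformly bounded---so that the remainder is controlled and each $s_j$ genuinely lies in $\cS$---requires quantitative injectivity of the mixing operator and careful bookkeeping of the overlaps $\int f_{\eta_i}f_{\eta_j}/f_{g_0}\,\dif x$. Establishing the matching Donsker property in part (i), with its uniform-in-$\cG$ remainder bound, is the main analytic burden on that side.
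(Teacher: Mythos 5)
Your part (ii) is essentially the paper's own argument. The perturbations $\dif\eta=\psi\,\dif g_0$ with $\psi$ bounded, $\int\psi\,\dif g_0=0$, are exactly the multiplicative perturbations $\dif g=(1+\sum_k c_k q_k)\,\dif g_0$ that the paper uses (there the $q_k$ are Gram--Schmidt orthogonal polynomials in $L_2(g_0)$, a concrete instance of your construction); boundedness puts these perturbed measures in $\cG$ and makes the scores bounded, identifiability of the mixing operator (the paper's assumption \ref{(D)}, verified via characteristic-function/Laplace-transform uniqueness) gives linear independence of the induced scores, and a fixed-dimensional CLT plus the one-dimensional submodel lower bound (the paper's Lemma~\ref{lem:LOW}) gives divergence as $N\to\infty$. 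Note also that the injectivity you flag as the ``main obstacle'' is not quantitative and is not hard: it is plain identifiability applied to the positive and negative parts of $\psi\,\dif g_0$.

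Part (i), however, contains a genuine gap. The step ``General $g$ produce convex combinations of these, so $\cS$ is Donsker'' ignores the normalization. The \emph{unnormalized} scores $f_g/f_{g_0}-1=\int\bigl(p_{\theta'}/f_{g_0}-1\bigr)\dif g(\theta')$ do lie in the closed convex hull of the one-parameter family $\{p_{\theta'}/f_{g_0}-1:\theta'\in\Theta\}$ --- but this is true for \emph{every} $g_0$, not just finitely discrete ones. The elements of $\cS$ are $s_g=(f_g/f_{g_0}-1)/\chi(f_g,f_{g_0})$, and dividing by $\chi(f_g,f_{g_0})$, which tends to zero as $g\to g_0$, destroys the convex-hull structure: $\cS$ is the set of directions from which $g_0$ can be approached, which is much richer than the convex hull itself. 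The telltale sign that your argument cannot be repaired as stated is that it never uses finite discreteness of $g_0$ in an essential way; were it valid, it would prove convergence for all $g_0$, contradicting part (ii) --- which you yourself prove. What is actually needed, and what constitutes the paper's key technical contribution, is the moment comparison lemma (Lemma~\ref{lem:MCL}, multivariate version Lemma~\ref{lem:MCL-multi}): if $g_0$ has $J$ atoms, then $|m_{k,g}-m_{k,g_0}|\le k(M+1)^{2Jk}\max_{j\in[2J]}|m_{j,g}-m_{j,g_0}|$ for all $k>2J$. Combined with the two-sided comparison between $\chi(f_g,f_{g_0})$ and moment differences (Lemma~\ref{lem:DEN-multi}), this shows that when the normalized score is expanded in the Hermite/Poisson--Charlier basis as in~\eqref{eq:defS}, the coefficient sequences have uniformly bounded $\ell^2$ norm; $\cS$ is therefore contained in a sequence-space (``elliptical'') class to which Theorem 2.13.2 of \cite{van1996weak} applies, yielding both the Donsker property and the square-integrable envelope. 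Your Hellinger-bracketing/peeling remark concerns the remainder in the expansion, not this point, which is the actual crux of part (i).
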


Theorem~\ref{thm:MAIN} illustrates the core of our findings. It is a consequence of two more general results which we discuss in later sections: a general result on divergence of the LRT when $g_0$ is not finitely discrete (see Theorem~\ref{thm:diverge} in section~\ref{sec:div}) and convergence of the LRT in certain multivariate Gaussian/Poisson mixtures (see Theorem~\ref{thm:converge1} in section~\ref{sec:conv}). 

The \textit{score set} $\cS$ that is used to index the Gaussian process $\bG$ plays a key role in the asymptotic properties of the LRT. Intuitively, it can be thought of as characterizing all possible directions from which the null can be approached. We refer the interested reader to the discussions in \cite{gassiat2000likelihood, liu2003asymptotics, azais2009likelihood} and in section~\ref{sec:gen} for additional details. 

To describe the set $\cS$ more explicitly and provide some intuition for the reason behind the divergence result, we introduce some additional notation. For two densities $f,f_0$ with respect to a base measure $\mu$, the (square-rooted) chi-square divergence is defined as 
\begin{equation}\label{eq:defchi}
\chi(f,f_0) \coloneqq \Big\| \frac{f}{f_0} - 1 \Big\|_{L_2(f_0\dif \mu)}.
\end{equation}
A convenient way to parametrize the class of distributions $\cG$ is given by their moment sequences, this approach was also taken in~\cite{azais2009likelihood}. Specifically, fix a value $\theta_0$ and define \begin{equation}\label{def:MOM}
m_{k,g}\coloneqq\int_\Theta (\theta-\theta_0)^k\dif g(\theta), \quad k \geq 0. 
\end{equation}
When $g_0$ is finitely discrete, we will take $\theta_0$ to be a support point of $g_0$. Each $g \in \cG$ is uniquely determined by $\theta_0$ and $\{m_{k,g}\}_{k \in \bN}$ as guaranteed by the uniqueness property of the Hausdorff moment problem. Next we define orthogonal polynomials associated to $p_{\theta_0}$, as $q_0 \equiv 1$,
\begin{equation}\label{def:POL}
q_k(x)\coloneqq \frac{\partial^k}{\partial \theta^k}\frac{p_\theta(x)}{p_{\theta_0}(x)}\Bigg|_{\theta=\theta_0}, \quad k \geq 1. 
\end{equation}
When $p_\theta$ is Gaussian, $q_k$ are scaled versions of the Hermite polynomials, while for $p_\theta$ Poisson we obtain the (scaled) Poisson-Charlier polynomials \citep{morris1982natural}. In both cases, the sequence $\{q_k\}_{k \in\bN}$ is orthogonal in $L_2(p_{\theta_0}\dif\mu)$, i.e. $\int q_k(x)q_{k'}(x) p_{\theta_0}(x)\dif\mu(s) = 0$ for any $k \neq k'$. The set $\cS$ takes the form
\begin{equation}\label{eq:defS}
\cS \coloneqq \Big\{ s(\cdot) = \sqrt{\tfrac{p_{\theta_0}(\cdot)}{f_{g_0}(\cdot)}}\Big(\sum_{k=1}^\infty\tfrac{m_{k,g}-m_{k,g_0}}{k!\chi(f_g,f_{g_0})}q_k(\cdot)\sqrt{\tfrac{p_{\theta_0}(\cdot)}{f_{g_0}(\cdot)}}\Big):g\in\cG\backslash g_0\Big\}. 
\end{equation}

\begin{remark} A similar result was obtained in \cite{azais2009likelihood} in the case when $g_0$ is a point mass at $\theta_0$. In that case the ratio $\frac{p_{\theta_0}(\cdot)}{f_{g_0}(\cdot)}$ disappears and the score set contains weighted sums of orthogonal polynomials. \cite{azais2009likelihood} use this very explicitly, and extending their results beyond the case of degenerate $g_0$ requires a different approach. One of the key steps in this approach is discussed below.
\end{remark}

Note that by definition the functions $q_k(\cdot)\sqrt{p_{\theta_0}(\cdot)/f_{g_0}(\cdot)}$ are orthogonal in $L_2(f_{g_0}\dif\mu)$, so the size of the score set $\cS$ is determined by the sequences $\left\{(m_{k,g}-m_{k,g_0})/(k!\chi(f_g,f_{g_0}))\right\}_{k \in\bN}$. If $g_0$ is finitely discrete, we prove that the resulting class of sequences is not too rich. The key tool in showing this result is to realize that for discrete $g_0$ with at most $J$ components, higher-order moment differences $m_{k,g}-m_{k,g_0}$ can be controlled in terms of the first $2J$ moment differences. More formally, we have the following result which is of independent interest. The proof is given in the supplement. A similar result bounding higher-order moment differences using lower-order moment differences was established as Lemma 10 in \citet{wu2020optimal}, where both $g$ and $g_0$ are assumed to be finitely discrete.

\begin{lemma}\label{lem:MCL}
Assume that $\Theta \subseteq \bR$ is bounded. Let $g_0$ be a finite discrete distribution on $\Theta$ with $J$ support points. Fix an arbitrary $g\in\cG$ and define 
$$
\Delta_g\coloneqq\max_{k\in [2J]}|m_{k,g}-m_{k,g_0}|. 
$$
Then, for any $k>2J$ and $M\coloneqq\sup_{\theta\in\Theta}|\theta-\theta_0|$,
\begin{equation}\label{eq:MCL}
|m_{k,g}-m_{k,g_0}|\le k(M+1)^{2Jk}\Delta_g. 
\end{equation}
\end{lemma}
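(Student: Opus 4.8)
The plan is to control the higher-order moment differences through a linear recurrence generated by the squared node polynomial of $g_0$. Write $d_k \coloneqq m_{k,g}-m_{k,g_0}$ for the moment differences, and record the two facts we start from: $d_0 = 0$ (both are probability measures, so $m_{0,g}=m_{0,g_0}=1$) and $|d_k|\le\Delta_g$ for $1\le k\le 2J$. Let $\theta_1,\dots,\theta_J$ be the atoms of $g_0$ and set $P(\theta)\coloneqq\prod_{j=1}^J(\theta-\theta_j)$, the monic node polynomial of degree $J$. Expanding its square in powers of $(\theta-\theta_0)$, write $P(\theta)^2=\sum_{\ell=0}^{2J}c_\ell(\theta-\theta_0)^\ell$ with $c_{2J}=1$. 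Two elementary observations drive everything: first, since $P$ vanishes at every atom of $g_0$, we have $\int (\theta-\theta_0)^j P(\theta)^2\dif g_0(\theta)=0$ for all $j\ge 0$; second, because $|\theta_j-\theta_0|\le M$, the coefficient $\ell_1$-norm is submultiplicative, so $\sum_{\ell=0}^{2J}|c_\ell|=\|P^2\|_1\le\|P\|_1^2\le\prod_{j=1}^J(1+|\theta_j-\theta_0|)^2\le(1+M)^{2J}\eqqcolon B$.

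Next I would turn the leading relation $(\theta-\theta_0)^{2J}=P(\theta)^2-\sum_{\ell=0}^{2J-1}c_\ell(\theta-\theta_0)^\ell$ into a recurrence. Multiplying by $(\theta-\theta_0)^{k-2J}$ and integrating against $g-g_0$ gives
\[
d_k = E_k - \sum_{\ell=0}^{2J-1}c_\ell\,d_{k-2J+\ell},\qquad E_k\coloneqq\int(\theta-\theta_0)^{k-2J}P(\theta)^2\dif g(\theta),
\]
where the $g_0$-part of the first term vanishes by the first observation above. The crucial point---and the reason for squaring $P$ rather than using $P$ itself---is that $P^2\ge 0$, so $E_k$ can be bounded without any Cauchy--Schwarz loss:
\[
|E_k|\le M^{k-2J}\int P(\theta)^2\dif g(\theta)=M^{k-2J}\Big|\sum_{\ell=0}^{2J}c_\ell\,d_\ell\Big|\le M^{k-2J}B\,\Delta_g\le B^k\Delta_g,
\]
where I again used that $P^2$ integrates to zero against $g_0$ to replace $\int P^2\dif g$ by $\int P^2\dif(g-g_0)=\sum_\ell c_\ell d_\ell$, then $d_0=0$ and $|d_\ell|\le\Delta_g$, and finally the elementary inequality $M^{k-2J}B\le(1+M)^k\le B^k$ valid for $k>2J$.

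With the recurrence in hand, the bound follows by induction on $k$, aiming at $|d_k|\le k\,B^k\Delta_g=k(1+M)^{2Jk}\Delta_g$. For $k\le 2J$ this holds since $|d_k|\le\Delta_g\le kB^k\Delta_g$. For $k>2J$, combining the recurrence with $\sum_{\ell=0}^{2J-1}|c_\ell|\le B$ and the inductive hypothesis (the bound $jB^j$ is nondecreasing in $j$ because $B\ge 1$, so the worst index among $j\in\{k-2J,\dots,k-1\}$ is $j=k-1$) gives $|d_k|\le B^k\Delta_g + B\cdot(k-1)B^{k-1}\Delta_g = kB^k\Delta_g$, which closes the induction and is exactly~\eqref{eq:MCL}. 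I expect the only genuinely delicate step to be the bound on $E_k$: one must resist estimating $\int(\theta-\theta_0)^{k-2J}P\dif g$ directly, where the lack of a sign would force a lossy Cauchy--Schwarz step and yield only $\sqrt{\Delta_g}$, and instead exploit the nonnegativity of $P^2$ together with the identity $\int P^2\dif g=\int P^2\dif(g-g_0)$ to reduce everything to the controlled low-order differences; the remaining bookkeeping on coefficient norms and the induction are routine.
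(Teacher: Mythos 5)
Your proof is correct and takes essentially the same route as the paper's: both rest on the squared node polynomial $P(\theta)^2=\prod_{j=1}^J(\theta-\theta_j)^2$, exploiting that it vanishes on the support of $g_0$ and is nonnegative (so $\int(\theta-\theta_0)^{k-2J}P^2\,\dif g\le M^{k-2J}\int P^2\,\dif(g-g_0)$, which reduces to low-order moment differences), and then close a $2J$-term recurrence for the moment differences by induction. The only differences are cosmetic: you bound the coefficients of $P^2$ by $\ell_1$-submultiplicativity instead of the paper's explicit binomial expansion (both give $(M+1)^{2J}$), and you run the induction directly on the stated bound $k(M+1)^{2Jk}\Delta_g$ rather than on the paper's slightly sharper intermediate bound $(k-2J)(M+1)^{2J(k-2J)+1}\Delta_g$.
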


As \( J \) tends to infinity, the coefficients in \eqref{eq:MCL} diverge. Hence this result is only useful for finitely discrete \( g_0 \). In fact, for any \( g_0 \) that is not finitely discrete, one can construct a finite discrete distribution \( g \) that matches its first several moments but differs in higher-order moments---for example, via Gaussian quadrature methods. Thus, lower-order moment differences impose no constraints on higher-order moment differences. Consequently, such $g_0$ have much richer score sets. This can be seen as intuitive reason for the divergence of the LRT.

\subsection{Divergence of the likelihood ratio test}\label{sec:div}

In this section, we provide general results on the divergence of the non-parametric LRT~\eqref{eq:defLRT} when $g_0$ is not a finitely discrete distribution. We will only need to make the following mild assumptions, with the key point being identifiability in terms of the chi-square divergence of marginal distributions.

\begin{enumerate}\renewcommand{\theenumi}{(D)}
\renewcommand{\labelenumi}{\theenumi}
\item \label{(D)} The parameter set $\Theta \subseteq \bR^d$ is bounded, $g_0$ is not finitely discrete, and $\chi(f_g,f_{g_0}) = 0$ iff $g=g_0$. 
\end{enumerate}

\noindent
Assumption \ref{(D)} already implies divergence of the LRT.

\begin{theorem}\label{thm:diverge}
Under assumption \ref{(D)} we have in probability
\[
L_n(\cG,g_0) \to \infty.
\]
\end{theorem}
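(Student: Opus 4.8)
The plan is to bound $L_n(\cG,g_0)$ from below by the likelihood ratio statistic of a smooth finite-dimensional submodel of dimension $K$, to show that this lower bound converges in distribution to $\tfrac12\chi^2_K$, and then to let $K\to\infty$. The crucial structural input, which is where the hypothesis enters, is that precisely when $g_0$ is \emph{not} finitely discrete one can build such submodels of arbitrarily large dimension. For the submodel I would use signed reweightings of $g_0$ rather than mixtures with point masses: for a bounded function $v$ on $\Theta$ with $\int v\,\dif g_0=0$, set $\dif g_{t}=(1+t\,v)\,\dif g_0$. For $|t|\le\|v\|_\infty^{-1}$ this is a genuine element of $\cG$, and $t=0$ is an \emph{interior} point since both signs of $t$ are admissible. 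A direct computation gives $f_{g_t}=f_{g_0}\,(1+t\,\Phi_v)$ with score
\[
\Phi_v(x)\coloneqq \frac{\int_\Theta p_\theta(x)\,v(\theta)\,\dif g_0(\theta)}{f_{g_0}(x)},\qquad \bE_{g_0}[\Phi_v]=0,\quad \|\Phi_v\|_\infty\le\|v\|_\infty .
\]
Boundedness of $\Phi_v$ is what makes the local analysis painless: all chi-square divergences are finite and the CLT/LLN below apply with bounded summands, so no delicate tail conditions on $f_{g_0}$ are needed.

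The heart of the argument is a dimension count for the linear map $T\colon v\mapsto \Phi_v$ from $\mathcal W\coloneqq\{v\in L_\infty(\dif g_0):\int v\,\dif g_0=0\}$ into $L_2(f_{g_0}\dif\mu)$. First, $T$ is injective under \ref{(D)}: if $\Phi_v\equiv0$ then (after rescaling so $\|v\|_\infty\le1$) the measure $g^{+}$ with $\dif g^{+}=(1+v)\dif g_0$ lies in $\cG$ and satisfies $f_{g^{+}}=f_{g_0}$, hence $\chi(f_{g^{+}},f_{g_0})=0$; by the identifiability in \ref{(D)} this forces $g^{+}=g_0$, i.e. $v=0$ in $\mathcal W$. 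Consequently $\dim T(\mathcal W)=\dim\mathcal W$, and $\dim\mathcal W=\infty$ exactly when $g_0$ is not finitely discrete: for a finitely discrete $g_0$ with $J$ atoms one has $L_\infty(\dif g_0)\cong\bR^J$ and $\dim\mathcal W=J-1$, whereas any continuous part or infinitely many atoms make $L_\infty(\dif g_0)$ infinite-dimensional. This is the step I expect to be the main obstacle, since it is where \ref{(D)} and the non-finitely-discrete hypothesis are turned into an analytic statement about the score space; it is also the exact mirror image of Lemma~\ref{lem:MCL}, which certifies the opposite, finite-dimensional, behaviour for finitely discrete $g_0$.

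Granting infinite-dimensionality, fix $K$ and choose $v_1,\dots,v_K\in\mathcal W$ with linearly independent images; after Gram--Schmidt in $L_2(f_{g_0}\dif\mu)$ (which stays inside $T(\mathcal W)$ by linearity of $T$) I may assume $s_m\coloneqq\Phi_{v_m}$ are orthonormal and bounded. Consider the $K$-parameter family $\dif g_t=(1+\sum_m t_m v_m)\dif g_0$, valid for $\|t\|$ small, and substitute $t_m=u_m/\sqrt n$. A second-order expansion of $\sum_i\log\{f_{g_t}(X_i)/f_{g_0}(X_i)\}$, whose cubic remainder is uniformly $o_p(1)$ on $\{\|u\|\le R\}$ by boundedness of the $s_m$, gives
\[
\ell_n(f_{g_t})-\ell_n(f_{g_0})=u^\top W_n-\tfrac12\,u^\top\widehat\Sigma_n\,u+o_p(1),
\]
where $W_n\Dkonv N(0,I_K)$ by orthonormality and $\widehat\Sigma_n\to I_K$ in probability. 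Because $t=0$ is interior, the maximizer $\widehat\Sigma_n^{-1}W_n$ is $O_p(1)$ and hence admissible for $R$ large, so the submodel statistic equals $\tfrac12\,W_n^\top\widehat\Sigma_n^{-1}W_n+o_p(1)\Dkonv\tfrac12\chi^2_K$ (a full, not chi-bar-squared, limit — the payoff of the interior point).

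To conclude, the submodel statistic is $\le L_n(\cG,g_0)$ and converges in distribution to $\tfrac12\chi^2_K$, so for every continuity point $C$ and every fixed $K$,
\[
\liminf_{n\to\infty}\bP\big(L_n(\cG,g_0)>C\big)\ \ge\ \bP\big(\tfrac12\chi^2_K>C\big).
\]
Since $\tfrac12\chi^2_K\to\infty$ in probability as $K\to\infty$, for any $C$ and any $\delta>0$ one picks $K$ with $\bP(\tfrac12\chi^2_K>C)>1-\delta$, giving $\lim_n\bP(L_n(\cG,g_0)>C)=1$. As $C$ is arbitrary, $L_n(\cG,g_0)\to\infty$ in probability. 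The remaining routine points are the uniform control of the expansion remainder and the multivariate CLT/LLN for the bounded scores, both of which are standard.
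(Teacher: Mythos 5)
Your proposal is correct, and its skeleton coincides with the paper's own proof: both lower-bound $L_n(\cG,g_0)$ by a $K$-dimensional submodel of \emph{multiplicative} perturbations $\dif g=(1+\sum_k c_k v_k)\,\dif g_0$, both convert the identifiability in \ref{(D)} into linear independence of the induced scores (your map $v\mapsto\Phi_v$ is exactly the paper's $q_k\mapsto h_k$ in Lemma~\ref{lem:INF-multi}, and your injectivity argument is its proof of \eqref{eq:NOM-multi}), both observe that infinitely many independent perturbation directions exist precisely when $g_0$ is not finitely discrete, and both finish with a $\tfrac12\chi^2(K)$ limit, Portmanteau, and $K\to\infty$. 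The genuine difference is how the $\chi^2(K)$ limit for the submodel is obtained. The paper verifies \ref{(A1)}--\ref{(A3)} for the full submodel $\cG^{\le K}$ and invokes the general star-shaped result (Theorem~\ref{thm:ASY}), then computes the supremum of the limiting Gaussian process over the sphere-indexed score set; this needs the Donsker/empirical-process apparatus but delivers the exact asymptotic distribution of $L_n(\cG^{\le K},g_0)$ and reuses machinery the paper needs anyway for the convergence half of Theorem~\ref{thm:MAIN}. You instead localize ($t=u/\sqrt n$, $\|u\|\le R$) and run a direct quadratic expansion with uniformly bounded scores, which makes the remainder control and the CLT/LLN elementary and bypasses Donsker conditions entirely; this is self-contained and arguably simpler for the divergence statement alone. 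Two minor points to tidy up: (i) for fixed $R$ the constrained maximizer equals $\widehat\Sigma_n^{-1}W_n$ only on an event whose probability tends to $\bP(\|Z\|\le R)$, so your displayed liminf inequality needs either a final $R\to\infty$ step or the choice $R^2>2C$ --- routine, but currently glossed over; (ii) the paper perturbs by orthogonal polynomials (Proposition~\ref{pro:ORT-multi}, using boundedness of $\Theta$), whereas you allow arbitrary bounded mean-zero $v\in L_\infty(\dif g_0)$; your dimension count for $L_\infty(\dif g_0)$ is correct and marginally more general, since it does not even require a moment-determinate parameterization.
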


The divergence of a likelihood ratio test statistic in a mixture setting was first observed by \citet{hartigan1985failure} in the context of one-dimensional Gaussian location mixture models. The author considered mixtures of the form 
$$
f_{\theta,t}(x)\coloneqq(1-t)\phi(x)+t\phi(x-\theta), 
$$
where $\theta\in\bR$ and $t\in[0,1]$. For any $K\ge1$, they constructed a class of models
$$
\cF^{K}\coloneqq\big\{f_{\theta,t}:\theta\in\{\theta_1,\dots,\theta_K\},t\in[0,1]\big\}. 
$$
Assuming the data are generated from a standard normal distribution, they showed that when the values in $\{\theta_1,\dots,\theta_K\}$ are sufficiently well-separated, the likelihood ratio $\sup_{f \in \cF^K} \ell_n(f) - \ell_n(\phi)$ is bounded below by a random variable $\mathbb{L}_K$, which diverges to infinity as $K\to\infty$. This phenomenon fundamentally relies on the unboundedness of the parameter space for $\theta$. Proving divergence in our setting requires a different line of reasoning which explicitly takes into account the nature of the null distribution $g_0$.

The key to proving Theorem~\ref{thm:diverge} is to show that for any $K \geq 1$ there exist a subset $\cG^{\le K} \subseteq \cG$ such that $L_n(\cG^{\le K},g_0) \stackrel{\mathcal{D}}{\longrightarrow} \chi^2(K)$. Since $K$ can be taken arbitrarily large, divergence in probability follows. The idea for constructing such a class relies on considering a novel \textit{multiplicative} perturbations of the distribution $g_0$ by a weighted sum of orthogonal polynomials, say $\{q_k\}_{k\in\bN}$, that are associated with $g_0$\footnote{see Proposition~\ref{pro:ORT-multi} in the supplement for details}. Specifically, we set 
\begin{equation}\label{def:Gk}
\cG^{\le K}\coloneqq  \Big\{g\in\cG: \exists c \in \bR^K \text{ s.t. }
\dif g(\theta)=\Big(1+\sum_{k=1}^Kc_kq_k(\theta)\Big)\dif g_0(\theta)
\Big\}.    
\end{equation}
The corresponding score set is characterized in Lemma~\ref{lem:INF-multi}, where we show that it corresponds to all normalized linear combinations of a collection of $K$ linearly independent functions in $L_2(f_{g_0}\dif\mu)$. This form of score set results in a $\chi^2(K)$ limiting distribution.

When $g_0$ is finitely discrete, only a finite number of such orthogonal polynomials can be constructed, and divergence cannot be established by this approach. This proof strategy provides insights into the role of the null distribution for the divergence of the LRT: if $g_0$ is not finitely discrete, $g_0$ can be perturbed in ``too many'' directions, resulting in a very rich class of score functions which leads to divergence of the LRT. Finitely discrete $g_0$ are in a sense more extremal points of the space of distributions $\cG$, and can only be perturbed in certain directions. In specific models such as Gaussian location mixtures and Poisson mixtures, those directions are sufficiently ``few'' (but still infinitely many) to ensure tightness of the LRT.

\subsection{Convergence of the LRT in Gaussian and Poisson mixtures} \label{sec:conv}
 
In this section, we provide a general result on the convergence of the non-parametric likelihood ratio test~\eqref{eq:defLRT} for multivariate distributions with independent Poisson and Gaussian components.

\begin{enumerate}\renewcommand{\theenumi}{(C1)}
\renewcommand{\labelenumi}{\theenumi}
\item \label{(C1)} Fix some $b\in\{0,1,\dots,d\}$. The component densities have a product structure of the form
$$
p_\theta(x)=\prod_{l=1}^dp_{\theta_l}(x_l), 
$$
where $p_{\theta_l}$ follows \ref{(GM)} for $1 \leq l \leq b$ and \ref{(PM)} for $b+1 \leq l \leq d$. The set $\Theta \subseteq \bR^b \times (0,\infty)^{d-b}$ is bounded. The base measure is a corresponding product of Lebesgue measure and counting measure. The distribution $g_0$ is discrete with a finite number of mass points. 
\end{enumerate}

\begin{theorem}\label{thm:converge1}
Under assumption \ref{(C1)} we have 
\[
L_n(\cG,g_0) \Dkonv \frac{1}{2}\sup_{s \in \cS} [(\bG(s))_+]^2, 
\]
where $\bG(s)$ is a centered Gaussian process on $\cS$ with covariance structure given by 
\[
\bE[\bG(s_1)\bG(s_2)] = \bE[s_1(X)s_2(X)], \quad X \sim f_{g_0},
\]
and the set $\cS$ is defined in~\eqref{eq:defS-multi} in the supplement.
\end{theorem}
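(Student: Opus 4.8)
The plan is to verify, for the specific models in assumption~\ref{(C1)}, the high-level Donsker-type conditions under which the general theory of likelihood ratio expansions (as developed by \cite{gassiat2002likelihood, liu2003asymptotics, azais2009likelihood}) yields a limit of the form $\tfrac12\sup_{s\in\cS}[(\bG(s))_+]^2$. The model-specific heart of the argument is showing that the score set $\cS$ in~\eqref{eq:defS-multi} is $f_{g_0}$-Donsker, and this is exactly where finite discreteness of $g_0$ enters through a multivariate analogue of Lemma~\ref{lem:MCL}.

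First I would reparametrize each alternative by its normalized score. For $g\in\cG\setminus\{g_0\}$ set $t_g\coloneqq\chi(f_g,f_{g_0})>0$ and $s_g\coloneqq (f_g/f_{g_0}-1)/t_g$, so that $f_g/f_{g_0}-1 = t_g s_g$ with $\|s_g\|_{L_2(f_{g_0}\dif\mu)}=1$ and $\int s_g\, f_{g_0}\dif\mu = 0$; the elements of $\cS$ are precisely the (limits of) these $s_g$. Writing $\bP_n$ for the empirical measure, $P_0$ for the law of $X\sim f_{g_0}$, and $\bG_n\coloneqq\sqrt{n}(\bP_n-P_0)$ for the empirical process, I have $\ell_n(f_g)-\ell_n(f_{g_0})=\sum_{i=1}^n\log(1+t_g s_g(X_i))$. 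Expanding $\log(1+u)=u-u^2/2+R(u)$ and using that $s_g$ is centered, so $\tfrac1n\sum_i s_g(X_i)=n^{-1/2}\bG_n(s_g)$, together with $\tfrac1n\sum_i s_g(X_i)^2\to 1$, I expect a uniform quadratic expansion
\[
\ell_n(f_g)-\ell_n(f_{g_0}) = t_g\sqrt{n}\,\bG_n(s_g) - \tfrac{n t_g^2}{2}\bigl(1+o_P(1)\bigr) + o_P(1),
\]
valid uniformly over $\cG$. Maximizing the leading term over $t_g\ge 0$ for each fixed direction localizes the relevant alternatives to $t_g=O(n^{-1/2})$ and gives the pointwise maximum $\tfrac12[(\bG_n(s_g))_+]^2$, so that after taking the supremum,
\[
L_n(\cG,g_0) = \tfrac12\sup_{s\in\cS}\bigl[(\bG_n(s))_+\bigr]^2 + o_P(1).
\]

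The main obstacle is to establish that $\cS$ is $f_{g_0}$-Donsker and, simultaneously, that the expansion remainder is $o_P(1)$ uniformly over the infinite-dimensional family $\cG$; these two needs are intertwined, since uniform remainder control requires envelope and equicontinuity bounds on the scores. I would handle both via the orthogonal-polynomial representation: each $s_g$ is a normalized combination $\sum_\alpha c_{\alpha,g}\,q_\alpha\sqrt{p_{\theta_0}/f_{g_0}}$ of the tensorized Hermite/Poisson--Charlier functions $q_\alpha\sqrt{p_{\theta_0}/f_{g_0}}$, which are orthogonal in $L_2(f_{g_0}\dif\mu)$, with coefficients $c_{\alpha,g}\propto (m_{\alpha,g}-m_{\alpha,g_0})/\alpha!$ indexed by multi-indices $\alpha\in\bN^d$. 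Because $g_0$ is finitely discrete, a multivariate version of Lemma~\ref{lem:MCL} bounds the high-degree multi-moment differences by the finitely many low-degree ones, and combined with boundedness of $\Theta$ this forces the high-degree coefficients to be uniformly negligible. Consequently $\cS$ lies, up to arbitrarily small $L_2(f_{g_0}\dif\mu)$-error, in the unit sphere of a fixed finite-dimensional subspace; this makes $\cS$ totally bounded and Donsker, and at the same time supplies the envelope and continuity bounds needed to push the remainder to zero. The delicate point is reconciling the non-local regime ($t_g$ not small), where a separate argument shows such $g$ cannot achieve the maximum, with the local regime where the quadratic approximation is sharp.

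Finally I would conclude by the continuous mapping theorem. The finite-dimensional approximation yields weak convergence $\bG_n\to\bG$ in $\ell^\infty(\cS)$ with $\bE[\bG(s_1)\bG(s_2)]=\bE[s_1(X)s_2(X)]$ for $X\sim f_{g_0}$, and the functional $z\mapsto \tfrac12\sup_{s\in\cS}[(z(s))_+]^2$ is continuous on $\ell^\infty(\cS)$ once $\cS$ is totally bounded. Applying it to the displayed representation of $L_n(\cG,g_0)$ gives
\[
L_n(\cG,g_0) \Dkonv \tfrac12\sup_{s\in\cS}\bigl[(\bG(s))_+\bigr]^2,
\]
which is the claimed limit.
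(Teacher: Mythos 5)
Your route is essentially the paper's: reduce to a general quadratic-expansion theorem for star-shaped families under a Donsker condition (this is the paper's Theorem~\ref{thm:ASY}; your uniform expansion in $t_g$ and maximization over $t_g\ge 0$ reproduces its proof), and then do the model-specific work of verifying the Donsker property of $\cS$ via the orthogonal-polynomial series, with a multivariate moment comparison lemma (the paper's Lemma~\ref{lem:MCL-multi}) combined with two-sided bounds relating $\chi(f_g,f_{g_0})$ to moment differences (Lemma~\ref{lem:DEN-multi}) to force a uniform bound on $\sum_\alpha c_{\alpha,g}^2$. The key lemmas and the decomposition are the same as in the paper.

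There is, however, one step whose justification as written would fail: the inference that, because the high-degree coefficients are uniformly negligible, ``$\cS$ lies, up to arbitrarily small $L_2(f_{g_0}\dif\mu)$-error, in the unit sphere of a fixed finite-dimensional subspace; this makes $\cS$ totally bounded and Donsker.'' Approximate finite-dimensionality, i.e.\ total boundedness in $L_2(f_{g_0}\dif\mu)$, is necessary but far from sufficient for the Donsker property: for an orthonormal centered system $\{e_k\}$, the class $\{(\log k)^{-1/2}e_k: k\ge 2\}$ is totally bounded (each element with large $k$ is close to a finite-dimensional set), yet it is not Donsker, since the limiting Gaussian process has increments of size about $\sqrt{2}$ between arbitrarily $L_2$-close index points and hence no uniformly continuous version. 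Smallness of the $L_2$-norm of the tail envelope does not repair this, because the empirical process is centered and $\sqrt{n}$ times a small mean is not small. What does make your argument work is exactly the structure you have already derived --- orthogonality of the $h_\alpha$ together with the uniform bound $\sup_g\sum_\alpha c_{\alpha,g}^2\le C$ --- via Cauchy--Schwarz applied to the tail of the series:
\[
\sup_{g\in\cG}\Big|\bG_n\Big(\sum_{|\alpha|>K}c_{\alpha,g}h_\alpha\Big)\Big|
\le \sqrt{C}\,\Big(\sum_{|\alpha|>K}(\bG_n(h_\alpha))^2\Big)^{1/2},
\qquad
\bE\Big[\sum_{|\alpha|>K}(\bG_n(h_\alpha))^2\Big]\le \sum_{|\alpha|>K}\|h_\alpha\|_{L_2(f_{g_0}\dif\mu)}^2\longrightarrow 0,
\]
as $K\to\infty$. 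This is precisely the content of the elliptical-class Donsker theorem (Theorem 2.13.2 in \cite{van1996weak}), which is what the paper invokes; you should appeal to it rather than to total boundedness, and note that the same Cauchy--Schwarz bound supplies the square-integrable envelope required in \ref{(A3)}. A secondary caution: the general theories you cite impose conditions beyond the Donsker property (completeness, continuous sample paths, or DQM-type assumptions), and verifying those is exactly what the paper avoids. What legitimizes maximizing the quadratic over $t_g\ge 0$ for a fixed direction $s_g$ (the lower-bound half of the expansion) is the star-shape property \ref{(A1)}: for every $g$ and $t\in[0,\chi(f_g,f_{g_0})]$ the convex combination $(1-t/\chi(f_g,f_{g_0}))f_{g_0}+(t/\chi(f_g,f_{g_0}))f_g$ lies again in $\cF$ and has score direction $s_g$. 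Make this explicit instead of attributing the expansion wholesale to the cited references.
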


The score set $\cS$ has a similar structure as in the univariate case in~\eqref{eq:defS}, but is more complicated notationally because it depends on moment tensors. A formal definition is provided in section~\ref{sec:multS} in the supplement. As in the univariate case, differences in higher-order moments can be bounded by differences in lower-order moments, effectively yielding a finite number of degrees of freedom in the neighborhood of $g_0$. A multivariate extension of Lemma~\ref{lem:MCL} is provided in the Supplement as Lemma~\ref{lem:MCL-multi}. 

As was noted in \cite{liu2003testing}, the LRT also converges in fair generality when the distribution that is mixed has a finite number of support points. One result along those lines is provided in section~\ref{sec:finsup} of the supplement.

\subsection{General theory of the likelihood ratio test for star-shaped models}\label{sec:gen}

In this section, we present a general result on the behavior of the LRT under a ``star-shaped'' assumption. It simplifies some prior works in this particular setting and is a core ingredient in the proofs for the results in the previous sections. 

Consider a class of densities $\cF$ with respect to a measure $\mu$ and assume i.i.d. observations $X_1,\dots,X_n$ from a true density $f_0\in\cF$. We work in a general framework imposing the following three assumptions on the pair $(\mathcal{F},f_0)$: 
\begin{enumerate} \renewcommand{\theenumi}{(A1)}
\renewcommand{\labelenumi}{\theenumi}
\item \label{(A1)} For any $f\in\cF$, the convex combination $(1-t)f_0+tf$ remains in $\cF$ for all $t\in[0,1]$. 
\renewcommand{\theenumi}{(A2)}
\renewcommand{\labelenumi}{\theenumi}
\item \label{(A2)} Recall the definition of $\chi$ in~\eqref{eq:defchi}. For all $f\in\cF\backslash f_0$, 
$
\chi(f,f_0) \in (0,\infty).
$
\renewcommand{\theenumi}{(A3)}
\renewcommand{\labelenumi}{\theenumi}
\item \label{(A3)} The score set 
\begin{equation}\label{eq:SCO}
\cS\coloneqq\left\{s_f:f\in\cF\backslash f_0\right\},\quad s_f\coloneqq\frac{\frac{f}{f_0}-1}{\chi(f,f_0)}, 
\end{equation}
is $f_0\dif \mu$-Donsker and has an $f_0\dif \mu$-square integrable envelope. 
\end{enumerate}

Similar assumptions were previously imposed by \citet{gassiat2002likelihood}, \citet{liu2003asymptotics} and \citet{azais2009likelihood} who studied the behavior of the likelihood ratio test under very general conditions. Compared to those works, our assumptions are stronger in that we require a certain star-shaped structure of $\cF$ in \ref{(A1)}. This assumption is satisfied in mixture models with non-parametric mixing distributions, but fails in many other examples such as finite mixtures. \ref{(A1)} is thus tailored to our specific setting.   

The following theorem presents the main result of this section, establishing that the asymptotic behavior of the likelihood ratio test (LRT) statistic is fully characterized by a Gaussian process indexed by the score set. This result closely resembles Theorem 3.1 in \cite{liu2003asymptotics}. However, by assuming \ref{(A1)} we can remove the requirements of completeness and continuous sample paths that were imposed in the latter reference. Theorem~\ref{thm:ASY} is essentially contained in the proofs of \citet{gassiat2002likelihood} and \citet{azais2009likelihood}. However, in there it is not stated in the precise form we need, so we state it here with simpler notation and in the generality in which we will apply it subsequently. 

\begin{theorem}\label{thm:ASY}
Under \ref{(A1)}, \ref{(A2)} and \ref{(A3)}, it holds that
\begin{equation}\label{eq:ASY}
\sup_{f\in\cF}\ell_n(f)-\ell_n(f_0)=\frac{1}{2}\sup_{s\in\cS}[(\bG_n(s))_+]^2+o_{\bP}(1), 
\end{equation}
where $\bG_n$ denotes the empirical process
$$
\bG_n(s)\coloneqq\sqrt{n}\Big(\frac{1}{n}\sum_{i=1}^ns(X_i) - \bE[s(X_1)]
\Big). 
$$
\end{theorem}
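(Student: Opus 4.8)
The plan is to use the star-shaped assumption~\ref{(A1)} to reduce the LRT to a one-dimensional optimization along paths, and then expand the log-likelihood along each path. For $f\in\cF\setminus f_0$ write $r_f\coloneqq f/f_0-1=\chi(f,f_0)\,s_f$ and consider $f_t\coloneqq(1-t)f_0+tf$, which lies in $\cF$ for all $t\in[0,1]$ by~\ref{(A1)}. Then $f_t/f_0-1=t\,r_f$, so $\chi(f_t,f_0)=t\,\chi(f,f_0)$ and $s_{f_t}=s_f$: the whole path shares the score $s_f$ while its chi-square radius sweeps $[0,\chi(f,f_0)]$. As $\cF$ is the union of all these paths, this gives the exact identity
\[
\sup_{f\in\cF}\ell_n(f)-\ell_n(f_0)=\sup_{s\in\cS}\ \sup_{a\in[0,\bar\chi(s)]}\ \sum_{i=1}^n\log\bigl(1+a\,s(X_i)\bigr),\qquad \bar\chi(s)\coloneqq\sup_{s_f=s}\chi(f,f_0),
\]
where $a=t\,\chi(f,f_0)$. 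Two normalizations drive the analysis: $\bE[s(X_1)]=\int(f-f_0)\dif\mu/\chi(f,f_0)=0$ (so $\bG_n(s)=n^{-1/2}\sum_i s(X_i)$) and $\bE[s(X_1)^2]=\chi(f,f_0)^{-2}\|r_f\|_{L_2(f_0\dif\mu)}^2=1$.

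The expansion $\log(1+as)\approx as-\tfrac12a^2s^2$, summed and combined with these normalizations, suggests that with $b\coloneqq a\sqrt n$,
\[
\sum_{i=1}^n\log\bigl(1+a\,s(X_i)\bigr)\approx b\,\bG_n(s)-\tfrac12b^2,
\]
whose maximum over $b\ge0$ is $\tfrac12[(\bG_n(s))_+]^2$, attained at $b=(\bG_n(s))_+$; taking $\sup_{s\in\cS}$ then gives the target. This also fixes the relevant scale: since $\sup_{s\in\cS}|\bG_n(s)|=O_\bP(1)$ by~\ref{(A3)}, the optimizer sits at $b=O_\bP(1)$, i.e.\ $a\asymp n^{-1/2}$. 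It remains to turn this heuristic into matching upper and lower bounds.

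For the upper bound I would drop the constraint $a\le\bar\chi(s)$ (which only enlarges the supremum) and localize: whenever $a$ is bounded away from $0$, $(1+as)f_0$ is a valid density distinct from $f_0$, so by the information inequality the mean increment $\bE[\log(1+as(X_1))]$ is strictly negative; combined with the uniform empirical-process control from~\ref{(A3)} this forces $\sum_i\log(1+as(X_i))$ to be negative with probability tending to one, so that up to $o_\bP(1)$ the supremum is attained on a shrinking region $a\le\delta_n\to0$, i.e.\ $b$ in a fixed compact set. There I would justify the expansion term by term: the linear part $b\,\bG_n(s)$ is controlled by the Donsker property, and the quadratic part equals $\tfrac12 b^2 P_n[s^2]=\tfrac12 b^2(1+o_\bP(1))$ uniformly since $\{s^2:s\in\cS\}$ is Glivenko--Cantelli with mean $1$ under the square-integrable envelope. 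The main difficulty is the logarithmic remainder $\sum_i\bigl(\log(1+as(X_i))-as(X_i)+\tfrac12a^2s(X_i)^2\bigr)$: because~\ref{(A3)} supplies only a square-integrable envelope, the scores may lack third moments, individual values $s(X_i)$ can be large, and the remainder even changes sign and blows up as $a\,s(X_i)\to-1$, so the elementary bound $0\le\log(1+u)-u+\tfrac12u^2\le\tfrac13u^3$ $(u\ge0)$ cannot be integrated directly. I would resolve this by truncating the scores at a level $L_n\to\infty$ with $L_n=o(\sqrt n)$: on $\{|s|\le L_n\}$ the cubic bound yields $a^3\sum_i|s(X_i)|^3\le a^3L_n\sum_i s(X_i)^2=O_\bP(n^{-1/2}L_n)=o_\bP(1)$ uniformly, while the tail $\{|s|>L_n\}$ is negligible by uniform square-integrability, $\sup_{s\in\cS}\bE[s^2\mathbbm{1}_{\{|s|>L_n\}}]\to0$. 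This gives $\sup_f\ell_n(f)-\ell_n(f_0)\le\tfrac12\sup_s[(\bG_n(s))_+]^2+o_\bP(1)$.

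For the lower bound the feasibility constraint $a\le\bar\chi(s)$ is binding and must be tracked. Fixing $\eta>0$ and restricting to $\cS_\eta\coloneqq\{s_f:\chi(f,f_0)\ge\eta\}$, the optimal step $a=(\bG_n(s))_+/\sqrt n=O_\bP(n^{-1/2})$ is feasible for every such $s$ once $n$ is large; plugging it in and reusing the expansion above gives $\sup_f\ell_n(f)-\ell_n(f_0)\ge\tfrac12\sup_{s\in\cS_\eta}[(\bG_n(s))_+]^2-o_\bP(1)$. Since $\cS_\eta\uparrow\cS$ as $\eta\downarrow0$, letting $\eta\to0$---using monotone convergence for the limit process $\bG$ together with the convergence of the suprema guaranteed by~\ref{(A3)}---recovers the supremum over all of $\cS$ and closes the sandwich, establishing~\eqref{eq:ASY}. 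I expect the uniform control of the logarithmic remainder under only a square-integrable envelope (the truncation step) to be the principal obstacle, with the lower-bound feasibility bookkeeping being more routine.
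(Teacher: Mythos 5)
Your lower-bound argument is sound and close in spirit to the paper's: the paper fixes a single score $s$ (Lemma~\ref{lem:LOW}), plugs the step $\hat t_n=(\bG_n(s))_+/\sqrt{n}$ into the star-shaped path, and then passes to the full supremum via a finite $L_2$-net and asymptotic equicontinuity, while you work uniformly over $\cS_\eta$ and let $\eta\downarrow0$; both routes use the same ingredients and either can be completed. The genuine gap is in your upper bound, at the localization step. Your subsequent expansion needs the maximizing step to satisfy $a=O_{\bP}(n^{-1/2})$ (``$b$ in a fixed compact set''): your own truncation bound $a^3L_n\sum_i s^2(X_i)=O_{\bP}(a^3nL_n)$ is $o_{\bP}(1)$ only at that scale, and blows up if all you know is $a\le\delta_n$ for some unspecified $\delta_n\to0$. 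But the mechanism you propose delivers much less than $a=O_{\bP}(n^{-1/2})$: (i) once you drop the constraint $a\le\bar\chi(s)$, the function $(1+as)f_0$ need not be a density and $\log(1+as(X_i))$ need not even be defined, so the information inequality does not apply; (ii) the claim that $\sum_i\log(1+as(X_i))<0$ uniformly over $\{s\in\cS,\,a\ge\delta\}$ with probability tending to one is not ``empirical-process control from \ref{(A3)}'': the class $\{\log(1+as)\}$ is unbounded below with no integrable lower envelope, and \ref{(A3)} says nothing about it; and (iii) even granting uniform negativity for every fixed $\delta>0$, this localizes only to a shrinking region $a\le\delta_n$ at an unknown rate, which does not imply $b=a\sqrt{n}=O_{\bP}(1)$. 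The jump from ``shrinking region'' to ``fixed compact set of $b$'' is precisely the missing quantitative statement, and it is where your sandwich fails to close.

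The paper fills exactly this hole with Lemma~\ref{lem:CVR}, following Gassiat: the deterministic inequality $\log(1+x)\le x-x_-^2/2$, valid for all $x>-1$, gives for any $f$ with $\ell_n(f)\ge\ell_n(f_0)$
\[
0\le \chi(f,f_0)\sum_{i=1}^n s_f(X_i)-\tfrac12\chi^2(f,f_0)\sum_{i=1}^n[(s_f(X_i))_-]^2,
\]
hence $\sqrt{n}\,\chi(f,f_0)\le 2\sup_{s\in\cS}\bG_n(s)\big/\inf_{s\in\cS}\tfrac1n\sum_{i=1}^n[(s(X_i))_-]^2$. The numerator is $O_{\bP}(1)$ by the Donsker property, and the denominator is bounded away from zero in probability by a Glivenko--Cantelli argument for $\{(s_-)^2\}$ plus a contradiction argument using the envelope. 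This one-sided inequality sidesteps both the undefined-logarithm issue and the absence of a lower envelope; some such quantitative device is indispensable. Incidentally, once $\chi(f,f_0)=O_{\bP}(n^{-1/2})$ is available uniformly over the relevant $f$, your truncation at $L_n$ becomes unnecessary: writing the Taylor remainder in the form $\chi^2\sum_i s_f^2(X_i)\,R\bigl(\chi\, s_f(X_i)\bigr)$ with $R(x)\to0$ as $x\to0$, the envelope $S$ and dominated convergence give $n^{-1/2}\max_{i\in[n]}S(X_i)=o_{\bP}(1)$, so the remainder is uniformly $o_{\bP}(1)$ using second moments only---no third moments and no truncation are needed.
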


For completeness we provide a full proof in the Supplement, section~\ref{sec:proofASY}. Similarly to the work of \citet{gassiat2002likelihood, liu2003asymptotics, azais2009likelihood}, the proof proceeds by showing that LHS $\geq$ RHS and LHS $\leq$ RHS. The proof of LHS $\leq$ RHS essentially follows the arguments in \cite{gassiat2002likelihood, azais2009likelihood}. The proof of LHS $\geq$ RHS is new and different from arguments in the existing literature. It utilizes \ref{(A1)} and allows us to avoid complicated discussions around differentiability in quadratic mean or similar arguments.

\section{Conclusions and discussion.}

This work establishes that, under non-parametric mixture models with Gaussian or Poisson components, the behavior of the likelihood ratio test (LRT) is governed by the structure of the null mixing distribution $g_0$. When $g_0$ is finitely discrete, the LRT converges, exhibiting an effectively finite-dimensional behavior despite the non-parametric model class. In contrast, when $g_0$ has infinitely many support points, the LRT diverges. This divergence is based on a new and general divergence mechanism beyond the non-compactness identified by \citet{hartigan1985failure}. In contrast to classical settings, our results reveal that convergence or divergence of the LRT is determined not only by the alternative but also by the particular form of the null hypothesis. Those results substantially advance our fundamental understanding of likelihood ratio statistics in non-parametric mixture models and will be useful for future methodological developments. 

Our proof method does not yields a rate of divergence when the LRT does diverge. Simulations in Gaussian location mixtures suggest a poly-logarithmic rate, which would be in line with the rates observed for unbounded parameter spaces \citep{jiang2016generalized,jiang2019rate}. However, the mechanisms underlying the divergence in those cases and in what we establish are different since the divergence we observe hinges on the specific form of $g_0$. Further investigations of this issue would be of interest but are beyond the scope of this paper. 

\bibliographystyle{apalike}
\bibliography{ref}

\section{Supplement} \label{sec:proof}

\subsection{Convergence of the LRT for distribution with a finite number of support points.} \label{sec:finsup}

Here, we briefly discuss the case where all $p_\theta$ can only take values in $\{1,\dots,K\}$ for a finite $K$. This includes Binomial and multinomial mixtures, where mixing is over success probabilities, as important special case. 

\begin{enumerate}\renewcommand{\theenumi}{(C2)}
\renewcommand{\labelenumi}{\theenumi}
\item \label{(C2)} There exists a $K \in \bN$ such that the densities $p_\theta$ are with respect to counting measure on $\{1,\dots,K\}$. The set $\Theta \subseteq \bR^d$ is arbitrary. The density $f_{g_0}$ is fully supported on $\{1,\dots,K\}$.
\end{enumerate}

A similar result was established as Theorem 3.2 in \citet{liu2003asymptotics} for more general discrete models. In contrast, we demonstrate that under the non-parametric setting, the requirements of completeness and continuous sample paths can be removed, and the LRT always converges.

\begin{theorem}\label{thm:converge2}
Under assumption \ref{(C2)}, $L_n(\cG,g_0)$ converges to a tight limit. 
\end{theorem}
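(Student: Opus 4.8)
The plan is to verify that assumptions \ref{(A1)}, \ref{(A2)}, and \ref{(A3)} hold under \ref{(C2)}, so that Theorem~\ref{thm:ASY} applies and yields convergence to the tight limit $\tfrac12\sup_{s\in\cS}[(\bG(s))_+]^2$. The key simplification in this discrete setting is that every density $f_g$ is supported on the finite set $\{1,\dots,K\}$ and is hence determined by the $K$-dimensional vector of probabilities $(f_g(1),\dots,f_g(K))$. Thus the entire model class $\cF=\{f_g:g\in\cG\}$ sits inside the probability simplex in $\bR^K$, and every score function $s_f$ is determined by a vector in $\bR^K$. This finite-dimensionality is what will make Donsker-type conditions automatic.

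First I would check \ref{(A1)}: for any $g,g_0\in\cG$, the convex combination $(1-t)f_{g_0}+tf_g$ equals $f_{(1-t)g_0+tg}$, and since $(1-t)g_0+tg\in\cG$ (probability distributions on $\Theta$ form a convex set), the star-shaped property holds. Next, for \ref{(A2)}, since $f_{g_0}$ is fully supported on $\{1,\dots,K\}$, the ratio $f/f_{g_0}$ is bounded on the finite support, so $\chi(f,f_0)$ is finite; and $\chi(f,f_0)=0$ forces $f=f_{g_0}$ as probability vectors, so $\chi(f,f_0)\in(0,\infty)$ for all $f\neq f_{g_0}$. The main work is \ref{(A3)}. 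Here the observation is that each score $s_f(k)=\big(f(k)/f_{g_0}(k)-1\big)/\chi(f,f_0)$ is a function on the $K$-point space, i.e.\ an element of the finite-dimensional space $L_2(f_{g_0}\dif\mu)\cong\bR^K$. A finite-dimensional family of functions is always Donsker, and I would obtain a square-integrable envelope from the explicit constraint that $s_f$ has unit $L_2(f_{g_0}\dif\mu)$-norm by construction (since $\|s_f\|_{L_2(f_{g_0})}=\chi(f,f_0)/\chi(f,f_0)=1$), so the whole score set $\cS$ lies on the unit sphere of a $K$-dimensional Hilbert space and is in particular uniformly bounded, giving a constant envelope.

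With \ref{(A1)}--\ref{(A3)} verified, Theorem~\ref{thm:ASY} gives the expansion~\eqref{eq:ASY}; and since $\cS$ is a bounded subset of a finite-dimensional space, the limiting Gaussian process $\bG$ indexed by $\cS$ has almost surely bounded sample paths, so $\sup_{s\in\cS}[(\bG(s))_+]^2$ is a tight random variable. The main obstacle I anticipate is not any deep analytic estimate but rather being careful that the envelope and Donsker conditions are stated with respect to the correct dominating measure $f_{g_0}\dif\mu$ and that $\cS$, while bounded in $\bR^K$, need not be closed---so one should argue tightness via the uniform bound on the finite-dimensional index set rather than via compactness. This is exactly the point at which the completeness and continuous-sample-path hypotheses of \citet{liu2003asymptotics} become unnecessary: in the finite-support regime they are trivially subsumed by finite-dimensionality.
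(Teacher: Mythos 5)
Your proposal is correct and follows essentially the same route as the paper: verify \ref{(A1)}--\ref{(A3)} and invoke Theorem~\ref{thm:ASY}, with the full support of $f_{g_0}$ on $\{1,\dots,K\}$ being the key fact that keeps the normalized scores uniformly bounded and reduces \ref{(A3)} to a finite-dimensional statement. The only cosmetic difference is that the paper writes $\cS$ explicitly as normalized linear combinations of the indicators $h_k(k')=\mathbf{1}_{\{k'=k\}}$ and cites the convex-hull Donsker theorem (Theorem 2.10.3 in \citealp{van1996weak}), whereas you appeal to norm equivalence on $\bR^K$ and the general fact that bounded subsets of finite-dimensional function classes are Donsker---both justifications are valid and rest on the same observation.
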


Compared to assumption \ref{(C1)}, models satisfying condition \ref{(C2)} admit a more explicit upper bound on the limiting distribution. Specifically, the largest such model is the family of all discrete distributions supported on $\{1,\dots,K\}$. The likelihood ratio test statistic for this model converges to a chi-square distribution with $K-1$ degrees of freedom, $\chi^2(K-1)$, which serves as an upper bound for the limiting distribution of $L_n(\cG, g_0)$. 

\subsection{Details on the score set in Theorem~\ref{thm:converge1}} \label{sec:multS}
We now describe the score set $\cS$. Fix a support point $\theta_0$ of $g_0$. The characterization of $\cS$ relies on the set of moment tensors $\{m_{k,g}\}_{k\in\bN}$ which are a generalization of the univariate centered moments in~\eqref{def:MOM}. Specifically, for $\theta \in \bR^d$, the tensor $\theta^{\otimes k} \in (\bR^d)^{\otimes k}$ is a $k$-way array with entries $(\theta^{\otimes k})_{i_1,\dots,i_k} = \prod_{j=1}^k \theta_{i_j}$. With this notation, $m_{k,g}$ is defined as
$$
m_{k,g}\coloneqq\int_\Theta(\theta-\theta_0)^{\otimes k}\dif g(\theta). 
$$
Each $g\in\cG$ is uniquely determined by $\theta_0$ and $\{m_{k,g}\}_{k\in\bN}$ as guaranteed by the uniqueness property of the Hausdorff moment problem. 

Next we define the orthogonal polynomials associated to $p_{\theta_0}$, 
\begin{equation}\label{eq:polynomial}
q_\alpha(x)\coloneqq\frac{\partial^\alpha}{\partial\theta^\alpha}\frac{p_\theta(x)}{p_{\theta_0}(x)}\Bigg|_{\theta=\theta_0}\coloneqq\prod_{l=1}^d\frac{\partial^{\alpha_l}}{\partial\theta_l^{\alpha_l}}\frac{p_{\theta_l}(x_l)}{p_{\theta_{0,l}}(x_l)}\Bigg|_{\theta_l=\theta_{0,l}},\quad\alpha\in\bN^d. 
\end{equation}
Here, each $q_\alpha(x)$ is a product of Hermite polynomials and Poisson-Charlier polynomials and they are orthogonal in $L_2(p_{\theta_0}\dif\mu)$. With this notation, the set $\cS$ takes the form
\begin{equation}\label{eq:defS-multi}
\cS\coloneqq\left\{s(\cdot)=\sqrt{\frac{p_{\theta_0}(\cdot)}{f_{g_0}(\cdot)}}\left(\sum_{k=1}^\infty\sum_{|\alpha|=k}\frac{m_{\alpha,g}-m_{\alpha,g_0}}{\alpha!\chi(f_g,f_{g_0})}q_\alpha(\cdot)\sqrt{\frac{p_{\theta_0}(\cdot)}{f_{g_0}(\cdot)}}\right):g\in\cG\backslash g_0\right\}. 
\end{equation}
Here, for a multi-index $\alpha\in\bN^d$ and $\theta \in \bR^d$, $\theta^\alpha \coloneqq \prod_{i=1}^d \theta_i^{\alpha_i}$, $\alpha!\coloneqq\prod_{l=1}^d\alpha_l!$, $|\alpha|\coloneqq\sum_{i=1}^d\alpha_l$ and 
$$
m_{\alpha,g}\coloneqq\int(\theta-\theta_0)^\alpha\dif g(\theta). 
$$
Note that $m_{\alpha,g} \in \bR$ is an entry of the moment tensor $m_{|\alpha|,g} \in (\bR^d)^{\otimes |\alpha|}$.

\subsection{Proofs of main results}

\begin{proof}[Proof of Theorem~\ref{thm:MAIN}] 
The statement of part 1 and the expression for the score set follows directly from Theorem~\ref{thm:converge1}, setting $d=1$ and $b=0$ to obtain the Poisson case and $b=1$ for the Gaussian case. The statement of part 2 follows from Theorem~\ref{thm:diverge} upon noting that condition \ref{(D)} holds in the Gaussian case by elementary properties of characteristic functions combined with the fact that $f_g$ is the density of $Y+\eps$ where $Y \sim g$ and $\eps\sim N(0,1)$ independent of $Y$. In the Poisson case, condition \ref{(D)} follows because the probability-generating function of a Poisson mixture is the Laplace transform of the mixing distribution, and uniqueness of the Laplace transform ensures the result.
\end{proof}

\begin{proof}[Proof of Lemma~\ref{lem:MCL}] 
Here we prove a slightly stronger bound
\begin{equation}\label{eq:MCL-tmp}
|m_{k,g}-m_{k,g_0}|\le(k-2J)(M+1)^{2J(k-2J)+1}\Delta_g. 
\end{equation}
Let $\theta_1,\dots,\theta_J$ be the support points of $g_0$. We will repeatedly use the following representation
\begin{equation}\label{eq:help1mom}
\prod_{j=1}^J(\theta-\theta_j)^2 = \prod_{j=1}^{2J}(\theta-\theta_0+\theta_0-\kappa_j) = \sum_{j=0}^{2J} (\theta - \theta_0)^{2J-j}\sum_{s \subseteq [2J]; |s| = j} \prod_{i \in s} (\theta_0 - \kappa_i), 
\end{equation}
where $\kappa_{2j} = \kappa_{2j-1} = \theta_j$, along with the bound
\begin{equation}\label{eq:help2mom}
\Big| \sum_{s \subseteq [2J]: |s| = j} \prod_{i \in s} (\theta_0 - \kappa_i) \Big| \leq \binom{2J}{j} M^{j}.
\end{equation}
We begin by noting
$$
\begin{aligned}
\left|\int(\theta-\theta_0)^{k-2J}\prod_{j=1}^J(\theta-\theta_j)^2\dif(g-g_0)(\theta)\right|
&=
\left|\int(\theta-\theta_0)^{k-2J}\prod_{j=1}^J(\theta-\theta_j)^2\dif g(\theta)\right|
\\
&\le M^{k-2J}\int\prod_{j=1}^J(\theta-\theta_j)^2\dif g(\theta)
\\
&=M^{k-2J}\left|\int\prod_{j=1}^J(\theta-\theta_j)^2\dif(g-g_0)(\theta)\right|
\\
&\le M^{k-2J}\sum_{j=0}^{2J}\binom{2J}{j}M^{j}\left|\int(\theta-\theta_0)^{2J-j}\dif(g-g_0)(\theta)\right|
\\
&\le (M+1)^{2J}M^{k-2J}\Delta_g, 
\end{aligned}
$$
where we used \eqref{eq:help1mom}, \eqref{eq:help2mom} in the second inequality and the identity
$$
\sum_{j=0}^{2J}\binom{2J}{j}M^{j}=(M+1)^{2J}, 
$$
in the last inequality. 
To proceed, recall $k > 2J$ and observe that
$$
\begin{aligned}
|m_{k,g}-m_{k,g_0}|
&= \left|\int(\theta-\theta_0)^k\dif(g-g_0)(\theta)\right|
\\
&\le (M+1)^{2J}M^{k-2J}\Delta_g+\left|\int\Big((\theta-\theta_0)^k-(\theta-\theta_0)^{k-2J}\prod_{j=1}^J(\theta-\theta_j)^2\Big)\dif(g-g_0)(\theta)\right|
\\
&\le (M+1)^{2J}M^{k-2J}\Delta_g+\sum_{j=1}^{2J}\binom{2J}{j}M^{j}\left|\int(\theta-\theta_0)^{k-j}\dif(g-g_0)(\theta)\right|
\\
&\le (M+1)^{2J}M^{k-2J}\Delta_g+(M+1)^{2J}\sup_{j\in[2J]}|m_{k-j,g}-m_{k-j,g_0}|, 
\end{aligned}
$$
where we used \eqref{eq:help1mom} and \eqref{eq:help2mom} in the second inequality. Now, we prove \eqref{eq:MCL-tmp} by induction. When $k=2J+1$, from the above formula, 
$$
\begin{aligned}
|m_{2J+1,g}-m_{2J+1,g_0}|&\le(M+1)^{2J}M\Delta_g+(M+1)^{2J}\Delta_g\\
&=(M+1)^{2J+1}\Delta_g. 
\end{aligned}
$$
Suppose \eqref{eq:MCL-tmp} holds for $k-1>2J$. Then, 
$$
\begin{aligned}
|m_{k,g}-m_{k,g_0}|&\le(M+1)^{2J}M^{k-2J}\Delta_g+(k-1-2J)(M+1)^{2J(k-2J)+1}\Delta_g\\
&\le(k-2J)(M+1)^{2J(k-2J)+1}\Delta_g, 
\end{aligned}
$$
where we used 
\[
2J(k-2J)+1 - k = (2J-1)k -4J^2+1 \geq (2J-1)(2J+1) -4J^2+1 = 0, 
\]
in the last inequality. This completes the proof. 
\end{proof}

\subsection{Proof of Theorem~\ref{thm:diverge}}

The analysis crucially relies on a set of orthogonal polynomials associated with $g_0$. 

\begin{proposition}\label{pro:ORT-multi}
Assume that $g_0$ is not finitely discrete and is supported on a compact set. Then there exists a sequence of polynomials $\{q_k\}_{k\in\bN}$ on $\Theta$ satisfying $q_0(\theta)\equiv1$ and, for any $k,k'\in\bN$, $\int_\Theta q_{k}(\theta)q_{k'}(\theta)\dif g_0(\theta) = \mathbf{1}_{\{k=k'\}}$.
\end{proposition}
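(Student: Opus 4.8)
The plan is to realize $\{q_k\}$ as the Gram--Schmidt orthonormalization of the polynomials inside the Hilbert space $H \coloneqq L_2(g_0)$, the crux being to verify that the polynomials span an infinite-dimensional subspace of $H$ precisely because $g_0$ is not finitely discrete.

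First I would set up the ambient space. Since $g_0$ is a probability measure supported on a compact set, every polynomial is bounded on that support and hence lies in $H = L_2(\Theta, g_0)$; moreover products of polynomials are again polynomials, so every inner product $\langle P,Q\rangle_H = \int_\Theta P(\theta)Q(\theta)\dif g_0(\theta)$ is finite. Let $\mathcal{P}\subseteq H$ denote the linear subspace consisting of the $g_0$-equivalence classes of polynomials. The target sequence will be an orthonormal system built inside $\mathcal{P}$ one element at a time.

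The key step --- and the one I expect to be the main obstacle, particularly in the multivariate case $\Theta\subseteq\bR^d$ --- is to show $\dim\mathcal{P}=\infty$. Note first that, since $g_0$ is not finitely discrete, its support must be infinite: a finite support would force $g_0$ to be a finite sum of point masses, contradicting the hypothesis. I would then fix an arbitrary $N\in\bN$, pick $N$ distinct support points $x_1,\dots,x_N$, and produce interpolating polynomials $P_1,\dots,P_N$ with $P_i(x_j)=\mathbf{1}_{\{i=j\}}$ via the standard product-of-affine-functions construction, using that any two distinct points differ in at least one coordinate. These are linearly independent in $H$: a polynomial is continuous, so if a polynomial $Q$ satisfies $Q=0$ $g_0$-a.e. then $Q$ vanishes at every support point (a nonzero value at a support point would persist on a neighborhood of positive $g_0$-measure by continuity). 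Hence $\sum_i c_i P_i = 0$ in $H$ forces $c_j = \sum_i c_i P_i(x_j) = 0$ for every $j$. Since $N$ is arbitrary, $\mathcal{P}$ is infinite-dimensional. This is exactly where infinitely many support points are used; for $d=1$ one could instead argue that a nonzero polynomial has finitely many roots, but that shortcut fails for $d\ge 2$, so the interpolation argument is what replaces it in general.

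Finally I would run Gram--Schmidt. Because $g_0$ is a probability measure, $\|1\|_H^2 = g_0(\Theta)=1$, so I set $q_0\equiv 1$. Inductively, given orthonormal polynomials $q_0,\dots,q_{k-1}$ spanning $W_{k-1}\subseteq\mathcal{P}$, infinite-dimensionality of $\mathcal{P}$ supplies a polynomial $p$ whose class lies outside $W_{k-1}$; then $\tilde q_k\coloneqq p-\sum_{j<k}\langle p,q_j\rangle_H\,q_j$ is again a polynomial, is nonzero in $H$, and is orthogonal to $q_0,\dots,q_{k-1}$, so $q_k\coloneqq \tilde q_k/\|\tilde q_k\|_H$ extends the system. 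The resulting sequence $\{q_k\}_{k\in\bN}$ consists of polynomials with $\int_\Theta q_k(\theta) q_{k'}(\theta)\dif g_0(\theta) = \mathbf{1}_{\{k=k'\}}$ and $q_0\equiv 1$, which is exactly the claim.
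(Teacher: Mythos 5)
Your proof is correct, and it establishes the crux --- infinite-dimensionality of the space of polynomial classes in $L_2(g_0)$ --- by a different device than the paper. The paper reduces to one dimension: since $g_0$ is not finitely discrete, some coordinate marginal, say that of $\theta_l$, is not finitely discrete (otherwise the support of $g_0$ would sit inside a finite Cartesian product of finite sets, hence be finite), and then the monomials $1,\theta_l,\theta_l^2,\dots$ are already linearly independent in $L_2(g_0)$, because a nontrivial univariate polynomial has finitely many roots while the marginal support is infinite; Gram--Schmidt then finishes exactly as in your argument. So the ``shortcut'' you dismiss as failing for $d\ge 2$ is in fact rescued by passing to a marginal, and that is precisely what the paper does. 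Your interpolation construction is a genuine alternative: it works directly on the $d$-dimensional support, needs no marginal-reduction step, and spells out the continuity argument (a polynomial vanishing $g_0$-a.e.\ must vanish on the support) that the paper leaves implicit. What the paper's route buys is brevity and a more explicit linearly independent family --- its $q_k$ can be taken to be polynomials in the single variable $\theta_l$ --- while your route is self-contained, handles all of $\bR^d$ uniformly, and never needs to identify a ``good'' coordinate. Both arguments are elementary and both hinge on the same substantive fact, namely that the support of a distribution that is not finitely discrete is infinite.
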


\begin{proof}
Since $g_0$ is not finitely discrete, there must exist an index $l \in [d]$ such that  the $l$th marginal of $g_0$ is not finite discrete. For this $l$, the elements in $\{1,\theta_l,\theta_l^2,\dots\}$ are linearly independent in $L_2(g_0)$. The desired polynomials can now be constructed through the Gram-Schmidt process. 
\end{proof}

These polynomials provide a parameterization of a nested sequence of subsets of $\cG$. For a nonnegative integer $K$, we define the $K$-order sub-model by
$$
\cF^{\le K}\coloneqq\left\{f_g:g\in\cG^{\le K}\right\},\, \quad \cG^{\le K}\coloneqq  \left\{g\in\cG: \exists c \in \bR^K \text{ s.t. }
\dif g(\theta)=\left(1+\sum_{k=1}^Kc_kq_k(\theta)\right)\dif g_0(\theta)
\right\}. 
$$
The corresponding score set has an explicit expression. 

\begin{lemma}\label{lem:INF-multi}
Assume \ref{(D)}. Fix $K \geq 1$. The score set corresponding to the $K$-order sub-model $\cG^{\le K}$ takes the following form
\begin{equation}\label{eq:SCO-multi}
\cS^{\le K}\coloneqq\left\{s_{f_g}:g\in\cG^{\le K}\backslash g_0\right\}=\left\{\frac{\sum_{k=1}^Kc_kh_k}{\|\sum_{k=1}^Kc_kh_k\|_{L_2(f_{g_0}\dif\mu)}}:c\in\bS^{K-1}\right\}, 
\end{equation}
where $\bS^{K-1}$ denotes the surface of $K$-dimensional unit ball in $\bR^K$ and the functions
$$
h_k(x)\coloneqq\frac{\int p_\theta(x) q_k(\theta)\dif g_0(\theta)}{f_{g_0}(x)}\mathbf{1}_{\{f_{g_0}(x) > 0\}} , 
$$
are linearly independent elements of $L_2(f_{g_0}\dif\mu)$.
\end{lemma}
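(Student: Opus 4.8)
The plan is to reduce the score set to an explicit finite-dimensional object by exploiting the linear structure of the perturbations defining $\cG^{\le K}$, and then to invoke the identifiability assumption \ref{(D)} to rule out degeneracy. First I would fix $g \in \cG^{\le K}$ with associated coefficient vector $c \in \bR^K$, so that $\dif g = (1 + \sum_{k=1}^K c_k q_k)\dif g_0$. Integrating against $p_\theta(x)$ and using linearity gives
\[
f_g(x) = f_{g_0}(x) + \sum_{k=1}^K c_k \int p_\theta(x) q_k(\theta)\dif g_0(\theta),
\]
so that, on $\{f_{g_0}>0\}$, one has $\frac{f_g}{f_{g_0}} - 1 = \sum_{k=1}^K c_k h_k$ in $L_2(f_{g_0}\dif\mu)$. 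Because $\{q_k\}$ is orthonormal in $L_2(g_0)$, one has $g = g_0$ precisely when $c = 0$; consequently, for $g \neq g_0$ the score is $s_{f_g} = (\sum_k c_k h_k)/\|\sum_k c_k h_k\|_{L_2(f_{g_0}\dif\mu)}$, which depends on $c$ only through its direction $c/\|c\|$.

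Before proceeding I would record that each $h_k$ lies in $L_2(f_{g_0}\dif\mu)$: applying the Cauchy--Schwarz inequality to the integral defining $h_k$ and then Tonelli's theorem yields $\|h_k\|_{L_2(f_{g_0}\dif\mu)}^2 \le \int q_k(\theta)^2 \big(\int p_\theta(x)\dif\mu(x)\big)\dif g_0(\theta) = \|q_k\|_{L_2(g_0)}^2 = 1$, since each $p_\theta$ is a density; thus no extra integrability assumption is needed. The key step is the linear independence of $\{h_k\}_{k=1}^K$, and this is exactly where assumption \ref{(D)} enters. Suppose toward a contradiction that $\sum_{k=1}^K c_k h_k = 0$ in $L_2(f_{g_0}\dif\mu)$ for some $c \neq 0$. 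Since the $q_k$ are continuous, hence bounded on the compact support of $g_0$, for $\eps>0$ small enough the function $1 + \eps\sum_k c_k q_k$ is nonnegative $g_0$-a.e., and $\int(1 + \eps\sum_k c_k q_k)\dif g_0 = 1$ because each $q_k$ with $k\ge 1$ is orthogonal to $q_0 \equiv 1$; hence $\dif g_\eps \coloneqq (1 + \eps\sum_k c_k q_k)\dif g_0$ defines a genuine element of $\cG$ with $g_\eps \neq g_0$. But $\frac{f_{g_\eps}}{f_{g_0}} - 1 = \eps\sum_k c_k h_k = 0$ in $L_2(f_{g_0}\dif\mu)$, so $\chi(f_{g_\eps},f_{g_0}) = 0$, contradicting the identifiability in \ref{(D)}. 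This forces $c = 0$, proving linear independence and in particular that the normalization above never divides by zero.

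Finally I would establish the claimed set equality. One inclusion is immediate: every $s_{f_g}$ with $g \in \cG^{\le K}\setminus g_0$ equals $(\sum_k u_k h_k)/\|\sum_k u_k h_k\|$ with $u = c/\|c\| \in \bS^{K-1}$. For the reverse inclusion, fix any $u \in \bS^{K-1}$; since $1 + \eps\sum_k u_k q_k \ge 0$ $g_0$-a.e. for all sufficiently small $\eps > 0$, the choice $c = \eps u$ produces a valid $g \in \cG^{\le K}\setminus g_0$, whose score is exactly $(\sum_k u_k h_k)/\|\sum_k u_k h_k\|$ by the scaling invariance noted above. Combining the two inclusions gives \eqref{eq:SCO-multi}. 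I expect the main obstacle to be the linear independence argument: the parameter set defining $\cG^{\le K}$ is constrained by the nonnegativity requirement $1 + \sum_k c_k q_k \ge 0$ and is therefore \emph{not} all of $\bR^K$, so one cannot work with an arbitrary $c$ directly. The device of rescaling by a small $\eps$ to remain inside $\cG$ while leaving the normalized score unchanged is what makes both the linear independence and the surjectivity onto $\bS^{K-1}$ go through, with identifiability \ref{(D)} as the essential input converting $\chi(f_{g_\eps},f_{g_0})=0$ into $g_\eps = g_0$.
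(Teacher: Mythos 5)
Your proof is correct and follows essentially the same route as the paper's: the scores reduce to normalized linear combinations of the $h_k$, assumption \ref{(D)} rules out degeneracy of $\|\sum_k c_k h_k\|_{L_2(f_{g_0}\dif\mu)}$, and a small-$\eps$ rescaling handles the nonnegativity constraint to get surjectivity onto all of $\bS^{K-1}$. If anything, your contradiction argument for linear independence is slightly more careful than the paper's, which establishes positivity of the norm only for coefficient vectors $c$ yielding valid probability measures in $\cG^{\le K}$ and leaves the rescaling step for arbitrary $c \neq 0$ implicit (your Cauchy--Schwarz bound $\|h_k\|_{L_2(f_{g_0}\dif\mu)} \le 1$ versus the paper's pointwise bound $|h_k| \le \sup_{\theta\in\Theta}|q_k(\theta)|$ is an immaterial difference).
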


\begin{proof}[Proof of Lemma~\ref{lem:INF-multi}]
To proceed, we establish two key facts. First, for $k\in[K]$, $h_k$ is $f_{g_0}\dif\mu$-square integrable since $|h_k(x)|\le\sup_{\theta\in\Theta}|q_k(\theta)|<\infty$. Therefore, for $\dif g_c=(1+\sum_{k=1}^Kc_kq_k)\dif g_0\in\cG^{\le K}$, 
\begin{equation}\label{eq:CHI-multi}
\chi(f_{g_c},f_{g_0})=\left\|\sum_{k=1}^Kc_kh_k\right\|_{L_2(f_{g_0}\dif\mu)}<\infty.
\end{equation}
We also note that for such $g_c$ with $c \neq 0$, 
\[
\int_\Theta q_k(\theta) \dif g_c(\theta) = c_k, 
\]
while $\int_\Theta q_k(\theta) \dif g_0(\theta) = 0$ for $k \in [K]$. Therefore $g_c \neq g_0$ and by assumption \ref{(D)}, 
\begin{equation}\label{eq:NOM-multi}
\left\|\sum_{k=1}^K c_k h_k\right\|_{L_2(f_{g_0}\dif\mu)} = \chi(f_{g_{c}},f_{g_0})>0. 
\end{equation}
Linear independence of the $h_k$ follows. Furthermore, 
\[
\cS^{\le K} \subseteq \left\{\frac{\sum_{k=1}^Kc_kh_k}{\|\sum_{k=1}^Kc_kh_k\|_{L_2(f_{g_0}\dif\mu)}}:c\in\bS^{K-1}\right\},
\]
and it remains to establish the inclusion in the other direction.

To this end, fix an arbitrary $c \in \bS^{K-1}$ and let 
\[
C \coloneqq \sum_{k=1}^K\sup_{\theta\in\Theta} |q_k(\theta)|. 
\]
By the definition of $C$ we have $\sup_{\theta \in \Theta } \frac{1}{C}\sum_{k=1}^K|c_kq_k(\theta)| \leq 1$, and since $\int_\Theta q_k(\theta) dg_0(\theta) = 0$ for $k \in [K]$, it follows that $g_{c/C}$ is a probability measure on $\Theta$. Noting that the score of $f_{g_{c/C}}$ is 
\[
\frac{\sum_{k=1}^Kc_kh_k}{\|\sum_{k=1}^Kc_kh_k\|_{L_2(f_{g_0}\dif\mu)}},
\]
we obtain
\[
\left\{\frac{\sum_{k=1}^Kc_kh_k}{\|\sum_{k=1}^Kc_kh_k\|_{L_2(f_{g_0}\dif\mu)}}:c\in\bS^{K-1}\right\} \subseteq \cS^{\le K}. 
\]
This completes the proof. \end{proof}

We are now ready to state and prove the key result which will imply the statement on Theorem~\ref{thm:diverge}. 

\begin{theorem}\label{thm:INF-multi}
Assume \ref{(D)}. The pair $(\cF^{\le K},f_{g_0})$ satisfies \ref{(A1)}, \ref{(A2)} and \ref{(A3)} for any $K\ge1$. Moreover, for any $K \geq 1$,
$$
2\left(\sup_{f\in\cF^{\le K}}\ell_n(f)-\ell_n(f_{g_0})\right)\Dkonv\chi^2(K). 
$$
\end{theorem}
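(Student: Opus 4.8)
The plan is to first verify the three structural conditions \ref{(A1)}--\ref{(A3)} for the pair $(\cF^{\le K},f_{g_0})$, then invoke Theorem~\ref{thm:ASY} to reduce the log-likelihood ratio to the supremum of the positive part of an empirical process, and finally exploit the explicit finite-dimensional form of the score set from Lemma~\ref{lem:INF-multi} to identify the limit as $\chi^2(K)$ via a generalized Cauchy--Schwarz computation and the multivariate CLT.

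For \ref{(A1)} I would use that $g\mapsto f_g$ is linear and that $\cG^{\le K}$ is closed under mixing with $g_0$: if $\dif g=(1+\sum_{k=1}^Kc_kq_k)\dif g_0$, then $(1-t)g_0+tg$ has $g_0$-density $1+\sum_k(tc_k)q_k$, so it lies in $\cG^{\le K}$, and $f_{(1-t)g_0+tg}=(1-t)f_{g_0}+tf_g$. Condition \ref{(A2)} is immediate from \eqref{eq:CHI-multi}--\eqref{eq:NOM-multi}: finiteness because each $h_k$ is bounded (hence square integrable), and strict positivity because $g_c\ne g_0$ forces $\chi(f_{g_c},f_{g_0})>0$ by \ref{(D)}. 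For \ref{(A3)} I would use the explicit description $\cS^{\le K}=\{\frac{\sum_kc_kh_k}{\|\sum_kc_kh_k\|}:c\in\bS^{K-1}\}$: this is a bounded subset of the finite-dimensional subspace $\mathrm{span}(h_1,\dots,h_K)\subseteq L_2(f_{g_0}\dif\mu)$. Since the $h_k$ are linearly independent and the sphere is compact, $\delta\coloneqq\min_{c\in\bS^{K-1}}\|\sum_kc_kh_k\|>0$, so $H\coloneqq\delta^{-1}(\sum_kh_k^2)^{1/2}$ is a square-integrable envelope (by Cauchy--Schwarz $|\sum_kc_kh_k|\le(\sum_kh_k^2)^{1/2}$ on the sphere), and the class is Donsker as a Lipschitz-indexed finite-dimensional family with square-integrable envelope.

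Given \ref{(A1)}--\ref{(A3)}, Theorem~\ref{thm:ASY} yields $2(\sup_{f\in\cF^{\le K}}\ell_n(f)-\ell_n(f_{g_0}))=\sup_{s\in\cS^{\le K}}[(\bG_n(s))_+]^2+o_\bP(1)$. The key reduction is that $\bG_n$ is linear and each $h_k$ has mean zero under $f_{g_0}$: indeed $\bE[h_k(X)]=\int q_k\dif g_0=\int q_kq_0\dif g_0=0$ for $k\ge1$ by the orthonormality in Proposition~\ref{pro:ORT-multi}. Writing $W_n\coloneqq(\bG_n(h_1),\dots,\bG_n(h_K))$ and $\Sigma_{jk}\coloneqq\bE[h_j(X)h_k(X)]$ (the positive-definite Gram matrix), one has $\bG_n(s_c)=c^\top W_n/\sqrt{c^\top\Sigma c}$, so $\sup_{s\in\cS^{\le K}}[(\bG_n(s))_+]^2=\Psi(W_n)$ with $\Psi(w)\coloneqq\sup_{c\ne0}[((c^\top w)/\sqrt{c^\top\Sigma c})_+]^2$. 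A generalized Cauchy--Schwarz computation gives $\sup_{c\ne0}(c^\top w)/\sqrt{c^\top\Sigma c}=\sqrt{w^\top\Sigma^{-1}w}\ge0$, and since the positive part of a nonnegative quantity is itself, $\Psi(w)=w^\top\Sigma^{-1}w$ is continuous. The multivariate CLT gives $W_n\Dkonv Z\sim N(0,\Sigma)$, and the continuous mapping theorem together with Slutsky then yields $2(\sup_{f\in\cF^{\le K}}\ell_n(f)-\ell_n(f_{g_0}))\Dkonv Z^\top\Sigma^{-1}Z\sim\chi^2(K)$.

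The verifications of \ref{(A1)}, \ref{(A2)} and the final distributional identification are routine. The steps requiring the most care are the Donsker verification in \ref{(A3)} and the correct handling of the positive part: one must check that the supremum over the sphere equals the supremum over all nonzero $c$ by scale-invariance of $c\mapsto(c^\top w)/\sqrt{c^\top\Sigma c}$, and that inserting the positive part does not change the value because the unconstrained supremum $\sqrt{w^\top\Sigma^{-1}w}$ is already nonnegative. Once these two points are settled, the argument is essentially mechanical.
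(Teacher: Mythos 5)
Your proposal is correct and follows essentially the same route as the paper: verify \ref{(A1)}--\ref{(A3)} via the explicit score-set characterization of Lemma~\ref{lem:INF-multi} (compactness of $\bS^{K-1}$ plus linear independence of the $h_k$ giving the envelope and Donsker property), invoke Theorem~\ref{thm:ASY}, and identify the supremum over the finite-dimensional score set as $\chi^2(K)$. The only difference is cosmetic and lies in the endgame: the paper first passes to the limiting Gaussian process and computes its supremum using the symmetry $c \mapsto -c$ and the fact that $\bigl\{\Sigma^{1/2}c/\sqrt{c^\top\Sigma c} : c\in\bS^{K-1}\bigr\}=\bS^{K-1}$, whereas you reduce the empirical supremum exactly to $W_n^\top\Sigma^{-1}W_n$ via the generalized Cauchy--Schwarz identity and then apply the multivariate CLT with the continuous mapping theorem---an equivalent (and arguably slightly more elementary) finish.
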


\begin{proof}[Proof of Theorem~\ref{thm:INF-multi}] 
For any $K\ge1$, the pair $(\cF^{\le K},f_{g_0})$ satisfies \ref{(A1)} and \ref{(A2)} by definition and the formula \eqref{eq:CHI-multi}. We now turn to verifying assumption \ref{(A3)} for the score set $\cS^{\le K}$, as defined in \eqref{eq:SCO-multi}. Using the formula \eqref{eq:NOM-multi} and the compactness of $\bS^{K-1}$, 
\begin{equation}\label{eq:hlinindep-multi}
\inf_{c\in\bS^{K-1}}\left\|\sum_{k=1}^Kc_kh_k\right\|_{L_2(f_{g_0}\dif\mu)}>0. 
\end{equation}
Therefore, in light of \eqref{eq:SCO-multi}, $\cS^{\le K}$ is, after scaling, a subset of the convex hull of the finitely many $f_{g_0}\dif \mu$-square integrable functions $\{h_k, k=1,\dots,K\}$, ensuring that assumption \ref{(A3)} holds; see Theorem 2.10.3 in \cite{van1996weak}. 

By Theorem \ref{thm:ASY}, 
$$
2\left(\sup_{f\in\cF^{\le K}}\ell_n(f)-\ell_n(f_{g_0})\right)\to\sup_{s\in\cS^{\le K}}[(\bG(s))_+]^2, 
$$
in distribution, where $\bG$ is a centered Gaussian process indexed by $\cS^{\le K}$, with covariance function 
$$
\mathrm{Cov}(\bG(s_1),\bG(s_2))\coloneqq\int s_1(x)s_2(x) f_{g_0}(x) \dif \mu(x), 
$$
for any $s_1,s_2\in\cS^{\le K}$. This process admits an equivalent representation in terms of a standard Gaussian vector $Z\sim N(0,I_K)$ and a full rank covariance matrix $\Sigma \in \bR^{K\times K}$ with entries
$$
\Sigma_{k_1,k_2}=\int h_{k_1}(x)h_{k_2}(x) f_{g_0}(x) \dif \mu(x), \quad 1\le k_1,k_2\le K.
$$
To see that $\Sigma$ has full rank, note that if this was not the case there would exist an $a \in \bS^{K-1}$ such that 
\[
0 = a^\top \Sigma a = \sum_{i,j=1}^K \int a_i a_j h_i(x) h_j(x) f_{g_0}(x) \dif \mu(x) = \int \Big(\sum_{j=1}^K a_j h_j(x) \Big)^2 f_{g_0}(x) \dif \mu(x),
\]
a contradiction to \eqref{eq:hlinindep-multi}.

Specifically, by Lemma~\ref{lem:INF-multi}, for every score function $s \in\cS^{\le K} $ there exists a $c = c(s) \in \bS^{K-1}$ such that
$$
s(x) = s_c(x) \coloneqq \frac{\sum_{k=1}^Kc_kh_k}{\|\sum_{k=1}^Kc_kh_k\|_{L_2(f_{g_0}\dif\mu)}}.
$$
It is then straightforward to verify that the centered Gaussian process $\widetilde\bG$ defined through $\widetilde \bG(c) \coloneqq  \frac{c^\top\Sigma^{1/2}Z}{\sqrt{c^\top\Sigma c}}, c \in \bS^{K-1}$ satisfies
\[
\mathrm{Cov}(\widetilde\bG(c),\widetilde\bG(c')) = \mathrm{Cov}(\bG(s_c),\bG(s_{c'})).
\]
Finally, a direct calculation yields the chi-square limit: 
$$
\sup_{s\in\cS^{\le K}}[(\bG(s))_+]^2 \stackrel{\mathcal{D}}{=} \sup_{c\in\bS^{K-1}}[(\widetilde\bG(c))_+]^2 = \left(\sup_{c\in\bS^{K-1}}\frac{c^\top\Sigma^{1/2}Z}{\sqrt{c^\top\Sigma c}}\right)^2=\left(\sup_{c\in\bS^{K-1}}c^\top Z\right)^2=Z^\top Z, 
$$
where we used that, one of $\widetilde\bG(c), \widetilde\bG(-c)$ is always non-negative and since $\Sigma$ has full rank, 
\[
\Big\{ \frac{\Sigma^{1/2}c}{\sqrt{c^\top \Sigma c}}: c \in \bS^{K-1} \Big\} = \bS^{K-1}. 
\]
~ \end{proof}

\begin{proof}[Proof of Theorem~\ref{thm:diverge}]
Theorem~\ref{thm:diverge} is now a simple consequence of Theorem \ref{thm:INF-multi}. Indeed, for any $M > 0, K>1$ we have by the Portmanteau Theorem, 
\[
\liminf_{n \to \infty} \bP\Big(\sup_{f \in \cF} \ell_n(f) - \ell_n(f_{g_0}) > M \Big) \geq \lim_{n\to \infty} \bP\Big(\sup_{f \in \cF^{\le K}} \ell_n(f) - \ell_n(f_{g_0}) > M \Big) \ge \bP(\chi^2(K) > M).
\]
The probability of the right-hand side can be made arbitrarily close to one by selecting a sufficiently large $K$, and so for any $M > 0$, 
\[
\liminf_{n \to \infty} \bP\Big(\sup_{f \in \cF} \ell_n(f) - \ell_n(f_{g_0}) > M \Big) = 1.
\]
~ \end{proof}

\subsection{Proof of Theorem \ref{thm:converge1}}

For an order-$k$ tensor $T \in (\mathbb{R}^d)^{\otimes k}$, we write $\|T\|_\infty$ for its maximum absolute entry, $\|T\|_F$ for the Frobenius norm defined as the square root of the sum of squared entries, and define the spectral norm by  
\[
\|T\|_2 \coloneqq \sup_{c_1,\dots,c_k \in \mathbb{S}^{d-1}} \langle T, c_1 \otimes \dots \otimes c_k\rangle, 
\]
where for two tensors $T, T^\prime$, $\langle T,T^\prime\rangle$ denotes the inner product of their vectorized versions. These norms satisfy the inequalities
\[
\|T\|_\infty \;\le\; \|T\|_2 \;\le\; \|T\|_F \;\le\; d^{k/2}\|T\|_\infty.
\]
A tensor $T$ is called symmetric if
\[
T_{j_1,\dots,j_k} = T_{j_{\pi(1)},\dots,j_{\pi(k)}}
\quad \text{for all } j_1,\dots,j_k \in [d] \text{ and all permutations $\pi$ on }[k].
\]
In particular, the moment tensors introduced in section~\ref{sec:multS} are symmetric. For symmetric tensors, a classical result due to Banach \citep{banach1938homogene,friedland2018nuclear} gives the sharper characterization
\begin{equation}\label{eq:SPE}
\|T\|_2 = \sup_{c \in \mathbb{S}^{d-1}} \bigl|\langle T, c^{\otimes k}\rangle\bigr|.
\end{equation}
Throughout this section, we adopt the notations from Section~\ref{sec:multS} and additionally define
\[
M \coloneqq \sup_{\theta \in \Theta} \|\theta - \theta_0\|,
\]
assuming \(M < \infty\), where \(\|\cdot\|\) denotes the Euclidean norm on \(\mathbb{R}^d\).

By Theorem 4 and Corollary 1 in \citet{morris1982natural}, we have
\begin{proposition}\label{pro:polynomial}
Recall the definition of $q_\alpha$ in~\eqref{eq:polynomial}. We have for $\alpha,\alpha'\in\bN^d$: 
\begin{enumerate}[(i)]
\item $\int q_{\alpha}(x)q_{\alpha'}(x) p_{\theta_0}(x) \dif \mu(x) = a_\alpha \alpha!\mathbf{1}_{\{\alpha=\alpha'\}}$. 
\item $\int q_\alpha(x) p_{\theta_0}(x) \dif \mu(x) = a_\alpha(\theta-\theta_0)^\alpha$. 
\end{enumerate}
Here, 
$$
a_\alpha\coloneqq\prod_{l=1}^d\frac{1}{V(\theta_{0,l})^{\alpha_l}},\quad V(\theta_{0,l})\coloneqq\begin{cases}
1, & \text{if the $l$th marginal is Gaussian;}\\
\theta_{0,l}, & \text{if the $l$th marginal is Poisson.}
\end{cases}
$$
\end{proposition}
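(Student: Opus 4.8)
The plan is to reduce everything to the univariate case and then to collapse both identities into a single closed-form computation of a bilinear integral. The starting observation is just a restatement of the definition~\eqref{eq:polynomial}: writing $R_\theta(x)\coloneqq p_\theta(x)/p_{\theta_0}(x)$, we have $q_\alpha=\partial^\alpha_\theta R_\theta\big|_{\theta=\theta_0}$, so each $q_\alpha$ arises by differentiating $R_\theta$ at $\theta_0$. Since under~\ref{(C1)} the density $p_\theta$, the polynomial $q_\alpha$, and the base measure $\mu$ all factor across the $d$ coordinates, both integrals in Proposition~\ref{pro:polynomial} factor into products of one-dimensional integrals over $l\in[d]$. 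It therefore suffices to prove the two identities when $d=1$, separately for a Gaussian coordinate~\ref{(GM)} and a Poisson coordinate~\ref{(PM)}; the stated constants then assemble via $\prod_l\alpha_l!=\alpha!$ and $\prod_l V(\theta_{0,l})^{-\alpha_l}=a_\alpha$, and the indicator factors as $\prod_l\mathbf{1}_{\{\alpha_l=\alpha'_l\}}$.

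For the univariate case the key quantity is the bilinear integral $I(\theta,\theta')\coloneqq\int R_\theta(x)R_{\theta'}(x)\,p_{\theta_0}(x)\dif\mu(x)=\int \frac{p_\theta(x)p_{\theta'}(x)}{p_{\theta_0}(x)}\dif\mu(x)$. I would compute this in closed form directly: in the Gaussian case by completing the square in a single Gaussian integral, and in the Poisson case by summing an exponential series over $\bN$. In both cases the result is the same up to the variance constant, namely
\[
I(\theta,\theta')=\exp\!\Big(\tfrac{(\theta-\theta_0)(\theta'-\theta_0)}{V(\theta_0)}\Big),
\]
with $V(\theta_0)=1$ for Gaussian and $V(\theta_0)=\theta_0$ for Poisson, matching the definition of $V$ in Proposition~\ref{pro:polynomial}. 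This single formula encodes both claims.

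Both identities then follow by differentiating $I$ under the integral sign at $(\theta_0,\theta_0)$. For part~(i), applying $\partial^k_\theta\partial^{k'}_{\theta'}$ and evaluating at $\theta=\theta'=\theta_0$ turns the integrand into $q_kq_{k'}p_{\theta_0}$ on one side and differentiates the closed form on the other; since $\exp(uu'/V(\theta_0))=\sum_m (uu')^m/(m!\,V(\theta_0)^m)$ with $u=\theta-\theta_0$, $u'=\theta'-\theta_0$, the mixed derivative vanishes unless $k=k'$ and equals $k!/V(\theta_0)^k=a_kk!$ otherwise, which is exactly~(i). For part~(ii) (where the $p_{\theta_0}$ in the displayed statement should read $p_\theta$), the same device with a single derivative gives $\int q_{k'}(x)p_\theta(x)\dif\mu(x)=\partial^{k'}_{\theta'}I(\theta,\theta')\big|_{\theta'=\theta_0}=\big((\theta-\theta_0)/V(\theta_0)\big)^{k'}=a_{k'}(\theta-\theta_0)^{k'}$. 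Taking products over coordinates recovers the multivariate statement.

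The computation of $I$ is elementary, so the only genuine technical point---and the step I would be most careful about---is justifying the repeated differentiation under the integral sign at $(\theta_0,\theta_0)$. This requires integrable dominating bounds, uniform for $(\theta,\theta')$ in a small neighborhood of $(\theta_0,\theta_0)$, for the integrands $\partial^j_\theta\partial^{j'}_{\theta'}\big(p_\theta p_{\theta'}/p_{\theta_0}\big)$; such bounds follow from the boundedness of $\Theta$ together with the Gaussian tail decay (respectively the factorial decay of the Poisson weights), so dominated convergence applies. Alternatively one can bypass this by invoking Theorem~4 and Corollary~1 of \citet{morris1982natural}, which identify the $q_k$ as the (scaled) Hermite and Poisson--Charlier polynomials and state precisely these orthogonality and moment relations; I would use the generating-function computation above as the self-contained route and cite Morris for the identification of the polynomials.
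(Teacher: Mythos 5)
Your proof is correct, but it takes a genuinely different route from the paper: the paper gives no argument at all for Proposition~\ref{pro:polynomial}, deriving it purely by citation to Theorem~4 and Corollary~1 of \citet{morris1982natural}, which treat the Gaussian and Poisson cases uniformly as natural exponential families with quadratic variance function. Your argument is self-contained. The coordinate-wise factorization is immediate from the product structure in \ref{(C1)} and the product form of $q_\alpha$ in \eqref{eq:polynomial}, and your closed form $I(\theta,\theta')=\exp\bigl((\theta-\theta_0)(\theta'-\theta_0)/V(\theta_0)\bigr)$ is the classical bilinear generating-function identity: both one-dimensional computations (completing the square for \ref{(GM)}, summing the exponential series for \ref{(PM)}) check out, the mixed Taylor coefficients at $(\theta_0,\theta_0)$ give exactly (i), a single derivative gives (ii), and the constants assemble correctly via $\prod_l V(\theta_{0,l})^{-\alpha_l}\alpha_l!\,\mathbf{1}_{\{\alpha_l=\alpha'_l\}}=a_\alpha\alpha!\,\mathbf{1}_{\{\alpha=\alpha'\}}$. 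You also correctly identify the typo in (ii): as the identity is used in the proof of Lemma~\ref{lem:DEN-multi} (to obtain $\int q_\alpha f_g\,\dif\mu=a_\alpha m_{\alpha,g}$), the integrand must involve $p_\theta$, not $p_{\theta_0}$, since otherwise the left side could not depend on $\theta$. The one technical point you flag---differentiation under the integral sign---is handled adequately by your local domination argument (a polynomial times a Gaussian centered at $\theta+\theta'-\theta_0$ in the continuous case, factorial decay of the summands in the discrete case, both locally uniform near $(\theta_0,\theta_0)$); alternatively one can expand $p_\theta/p_{\theta_0}$ in its everywhere absolutely convergent Taylor series, exactly as the paper does in Lemma~\ref{lem:NUM-multi}, and integrate term by term. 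In terms of trade-offs: the paper's citation is shorter and makes clear the result is not special to these two families but holds across NEF-QVF models, while your computation is elementary, independently verifiable, and makes the origin of the constants $a_\alpha$ and $V(\theta_0)$ completely transparent.
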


As noted by \citet{azais2009likelihood}, the relationship between the numerators of the score functions \eqref{eq:SCO} and the moments $\{m_{k,g}\}_{k\in\bN}$ can be derived via a Taylor series expansion. 

\begin{lemma}\label{lem:NUM-multi}
Assume \ref{(C1)}. For any $g\in\cG$, 
$$
\frac{f_g(x)}{p_{\theta_0}(x)}-1=\sum_{k=1}^\infty\sum_{|\alpha|=k}\frac{m_{\alpha,g}}{\alpha!}q_\alpha(x) 
$$
and the series converges absolutely for any $x \in \mathrm{supp}(f_{g_0})$.
\end{lemma}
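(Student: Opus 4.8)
The plan is to expand the density ratio $r_\theta(x) \coloneqq p_\theta(x)/p_{\theta_0}(x)$ into its multivariate Taylor series in $\theta$ about $\theta_0$, integrate against $g$, and then interchange the integral with the sum. Since any $x \in \mathrm{supp}(f_{g_0})$ lies in $\bR^b \times \bN^{d-b}$, we have $p_{\theta_0}(x) = \prod_{l=1}^d p_{\theta_{0,l}}(x_l) > 0$, so the ratio is well defined and $f_g(x)/p_{\theta_0}(x) = \int_\Theta r_\theta(x) \dif g(\theta)$. Under \ref{(C1)} the ratio factorizes as $r_\theta(x) = \prod_{l=1}^d r^{(l)}_{\theta_l}(x_l)$, and for each fixed $x_l$ the univariate factor extends to an entire function of the complex variable $\theta_l$: in the Gaussian case $r^{(l)}_{\theta_l}(x_l) = \exp\{(\theta_l-\theta_{0,l})(x_l-\theta_{0,l}) - (\theta_l-\theta_{0,l})^2/2\}$, and in the Poisson case $r^{(l)}_{\theta_l}(x_l) = e^{-(\theta_l-\theta_{0,l})}(\theta_l/\theta_{0,l})^{x_l}$. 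Consequently $r_\theta(x)$ is entire in $\theta$, its Taylor series about $\theta_0$ converges everywhere, and by the definition \eqref{eq:polynomial} of $q_\alpha$ its coefficients are exactly $q_\alpha(x)/\alpha!$, so that $r_\theta(x) = \sum_{\alpha \in \bN^d} \frac{(\theta-\theta_0)^\alpha}{\alpha!} q_\alpha(x)$.

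The heart of the argument is a domination bound ensuring absolute convergence. For each univariate factor I would apply Cauchy's estimate on a circle of radius $R > M$ in the complex $\theta_l$-plane, giving $|q_j^{(l)}(x_l)|/j! \le R^{-j} \max_{|u|=R} |r^{(l)}_{\theta_{0,l}+u}(x_l)|$, and the explicit forms above bound the maximum by $e^{R|x_l-\theta_{0,l}|+R^2/2}$ (Gaussian) or $e^{R}(1+R/\theta_{0,l})^{x_l}$ (Poisson), both finite for fixed $x_l$ since $\theta_{0,l} > 0$. Summing the geometric series $\sum_j (M/R)^j$ then shows $\sum_{j\ge 0} \frac{M^j}{j!}|q_j^{(l)}(x_l)| < \infty$ for each $l$. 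Since $M^{|\alpha|} = \prod_l M^{\alpha_l}$, $\alpha! = \prod_l \alpha_l!$ and $q_\alpha(x) = \prod_l q^{(l)}_{\alpha_l}(x_l)$, the distributive law for products of nonnegative convergent series yields $\sum_{\alpha\in\bN^d} \frac{M^{|\alpha|}}{\alpha!}|q_\alpha(x)| = \prod_{l=1}^d \big(\sum_{j\ge 0} \frac{M^j}{j!}|q^{(l)}_j(x_l)|\big) < \infty$.

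With this bound in hand the interchange becomes routine. On $\Theta$ one has $|(\theta-\theta_0)^\alpha| \le M^{|\alpha|}$, hence $\int_\Theta \sum_\alpha \frac{|(\theta-\theta_0)^\alpha|}{\alpha!}|q_\alpha(x)| \dif g(\theta) \le \sum_\alpha \frac{M^{|\alpha|}}{\alpha!}|q_\alpha(x)| < \infty$, so Tonelli followed by Fubini justifies exchanging integral and sum and gives $f_g(x)/p_{\theta_0}(x) = \sum_\alpha \frac{q_\alpha(x)}{\alpha!} \int_\Theta (\theta-\theta_0)^\alpha \dif g(\theta) = \sum_\alpha \frac{m_{\alpha,g}}{\alpha!} q_\alpha(x)$. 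The term $\alpha = 0$ contributes $m_{0,g}\,q_0(x)/0! = 1$ because $g$ is a probability measure and $q_0 \equiv 1$; moving it to the left yields the stated identity. The same bound together with $|m_{\alpha,g}| \le M^{|\alpha|}$ shows the remaining series converges absolutely on $\mathrm{supp}(f_{g_0})$.

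The main obstacle is the domination bound $\sum_\alpha \frac{M^{|\alpha|}}{\alpha!}|q_\alpha(x)| < \infty$; once it is available, the interchange and the absolute-convergence claim are immediate. The cleanest route is the complex-analytic one sketched above, exploiting that $p_\theta(x)/p_{\theta_0}(x)$ is entire in $\theta$ in both models so that Cauchy estimates control the growth of the $q_\alpha$. An alternative avoiding complex analysis would invoke the explicit generating-function identities for Hermite and Poisson--Charlier polynomials, but the Cauchy-estimate argument handles both component types uniformly and extends at once to the product structure, so I would favor it.
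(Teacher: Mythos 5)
Your proposal is correct, and it follows the same skeleton as the paper's proof: expand $p_\theta(x)/p_{\theta_0}(x)$ into its multivariate Taylor series about $\theta_0$ (valid because the ratio is a product of functions entire in the separate coordinates), dominate the terms on $\Theta$ by $\frac{M^{|\alpha|}}{\alpha!}|q_\alpha(x)|$, and interchange sum and integral via Fubini, with the $\alpha=0$ term supplying the $1$. The one substantive difference is how the key domination bound $\sum_\alpha \frac{M^{|\alpha|}}{\alpha!}|q_\alpha(x)|<\infty$ is justified: the paper cites general results on holomorphic functions of several variables (Theorem 1.2.5 and Corollary 2.3.7 of Krantz) and reads the bound off as absolute convergence of the Taylor series at $\theta=\theta_0+M$, whereas you derive it by hand, writing out the Gaussian and Poisson ratios in closed form, applying univariate Cauchy estimates on a circle of radius $R>M$, summing the geometric series $\sum_j (M/R)^j$, and factoring the multi-index sum into a product of $d$ convergent univariate series. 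Your route is more self-contained and produces explicit, $x$-dependent constants (harmless, since the Fubini interchange is performed at fixed $x$), at the cost of a longer argument; the paper's citation-based route is shorter but leans on multivariate complex-analysis machinery that your argument avoids.
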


\begin{proof} 
Under \ref{(C1)} the function $\theta \mapsto p_\theta(x)$ is a product of entire functions (each in a different argument), and thus the corresponding Taylor series 
$$
\frac{p_\theta(x)}{p_{\theta_0}(x)}-1=\sum_{k=1}^\infty\sum_{|\alpha|=k}\frac{(\theta-\theta_0)^\alpha}{\alpha!}q_\alpha(x) 
$$
converges absolutely everywhere by Theorem 1.2.5 and Corollary 2.3.7 from \citet{krantz2001function}. Furthermore, we have the bound
$$
\left|\frac{(\theta-\theta_0)^\alpha}{\alpha!}q_\alpha(x)\right|\le\frac{M^{|\alpha|}}{\alpha!}|q_\alpha(x)|. 
$$
The sum $\sum_{k=1}^\infty\sum_{|\alpha|=k} \tfrac{M^k}{\alpha!}|q_\alpha(x)|$ converges since the Taylor series converges absolutely at $\theta=\theta_0+M$. Applying Fubini's theorem to justify interchanging the sum and the integral, we obtain
$$
\begin{aligned}
\frac{f_g(x)}{p_{\theta_0}(x)}-1&=\int\left(\frac{p_\theta(x)}{p_{\theta_0}(x)}-1\right)\dif g(\theta)\\
&=\int\left(\sum_{k=1}^\infty\sum_{|\alpha|=k}\frac{(\theta-\theta_0)^\alpha}{\alpha!}q_\alpha(x)\right)\dif g(\theta)\\
&=\sum_{k=1}^\infty\sum_{|\alpha|=k}\frac{m_{\alpha,g}}{\alpha!}q_\alpha(x). 
\end{aligned}
$$
$~~$
\end{proof}

The next lemma compares the denominators of the score functions \eqref{eq:SCO} with the moment differences. A similar result was derived in Theorem 9 of \citet{bandeira2020optimal} for the case of multivariate Gaussian mixture models. 

\begin{lemma}\label{lem:DEN-multi}
Under \ref{(C1)}, for any $g\in\cG$, 
\begin{multline*}
\sup_{k\in\bN}\left(\min_{|\alpha|=k}\frac{a_\alpha^2}{\int q_\alpha^2(x)f_{g_0}(x)\dif\mu(x)}\right)\|m_{k,g}-m_{k,g_0}\|_\infty^2
\\
\le \chi^2(f_g,f_{g_0})
\le C_0\sum_{k=1}^\infty\left(\sup_{|\alpha|=k}a_\alpha\right)\frac{\|m_{k,g}-m_{k,g_0}\|_F^2}{k!},   
\end{multline*}
where
$$
C_0\coloneqq\sup_{x\in\mathrm{supp}(f_{g_0})}\frac{p_{\theta_0}(x)}{f_{g_0}(x)}<\infty. 
$$
\end{lemma}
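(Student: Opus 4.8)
The plan is to sandwich $\chi^2(f_g,f_{g_0}) = \int (f_g/f_{g_0}-1)^2 f_{g_0}\dif\mu$ between the two stated quantities by exploiting the orthogonal system $\{q_\alpha\}$ in $L_2(p_{\theta_0}\dif\mu)$ (Proposition~\ref{pro:polynomial}(i)) together with the moment identity (Proposition~\ref{pro:polynomial}(ii)). Throughout I would write $r\coloneqq f_g/f_{g_0}-1$, so that $\chi^2(f_g,f_{g_0}) = \|r\|_{L_2(f_{g_0}\dif\mu)}^2$. Since both models have $f_{g_0}>0$ everywhere, all integrals below are over the full space. The lower bound is obtained by testing $r$ against individual polynomials, and the upper bound by replacing $f_{g_0}$ with $p_{\theta_0}$ (paying the constant $C_0$) and then using orthogonality.

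For the lower bound, fix $\alpha$ with $|\alpha|=k\ge1$ and pair $r$ with $q_\alpha$. Using Fubini and Proposition~\ref{pro:polynomial}(ii), which gives $\int q_\alpha(x)p_\theta(x)\dif\mu(x)=a_\alpha(\theta-\theta_0)^\alpha$, one obtains
\[
\langle r, q_\alpha\rangle_{L_2(f_{g_0}\dif\mu)} = \int (f_g-f_{g_0})\,q_\alpha\,\dif\mu = a_\alpha\,(m_{\alpha,g}-m_{\alpha,g_0}).
\]
Cauchy--Schwarz then yields $a_\alpha^2(m_{\alpha,g}-m_{\alpha,g_0})^2 \le \chi^2(f_g,f_{g_0})\int q_\alpha^2 f_{g_0}\dif\mu$, hence
\[
\chi^2(f_g,f_{g_0}) \ge \frac{a_\alpha^2}{\int q_\alpha^2 f_{g_0}\dif\mu}\,(m_{\alpha,g}-m_{\alpha,g_0})^2.
\]
For each $k$ I would take the $\alpha$ with $|\alpha|=k$ attaining $\|m_{k,g}-m_{k,g_0}\|_\infty=\max_{|\alpha|=k}|m_{\alpha,g}-m_{\alpha,g_0}|$, bound the prefactor below by $\min_{|\alpha|=k}a_\alpha^2/\int q_\alpha^2 f_{g_0}\dif\mu$, and finally take the supremum over $k$; this is exactly the claimed lower bound.

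For the upper bound I would factor out $p_{\theta_0}$: with $\Delta\coloneqq(f_g-f_{g_0})/p_{\theta_0}$ we have $\chi^2(f_g,f_{g_0})=\int \Delta^2 (p_{\theta_0}/f_{g_0})\,p_{\theta_0}\dif\mu \le C_0\int \Delta^2 p_{\theta_0}\dif\mu$. By Lemma~\ref{lem:NUM-multi}, $\Delta=\sum_{k\ge1}\sum_{|\alpha|=k}c_\alpha q_\alpha$ with $c_\alpha\coloneqq(m_{\alpha,g}-m_{\alpha,g_0})/\alpha!$, converging pointwise. Applying Fatou's lemma to the squared partial sums and using orthogonality (Proposition~\ref{pro:polynomial}(i), $\int q_\alpha^2 p_{\theta_0}\dif\mu=a_\alpha\alpha!$) gives the one-sided bound
\[
\int \Delta^2 p_{\theta_0}\dif\mu \le \sum_{k\ge1}\sum_{|\alpha|=k}c_\alpha^2 a_\alpha\alpha! = \sum_{k\ge1}\sum_{|\alpha|=k}\frac{a_\alpha(m_{\alpha,g}-m_{\alpha,g_0})^2}{\alpha!}.
\]
I would then convert the multi-index sum to a Frobenius norm via the multiplicity count: each entry of the symmetric tensor $m_{k,g}-m_{k,g_0}$ of type $\alpha$ appears $k!/\alpha!$ times, so $\sum_{|\alpha|=k}(m_{\alpha,g}-m_{\alpha,g_0})^2/\alpha! = \|m_{k,g}-m_{k,g_0}\|_F^2/k!$; bounding $a_\alpha\le\sup_{|\alpha|=k}a_\alpha$ delivers the stated upper bound.

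The main obstacle will be justifying the term-by-term integration in the upper bound: the $c_\alpha$ are not manifestly convergent Fourier coefficients, and $\int\Delta^2 p_{\theta_0}\dif\mu$ could a priori be infinite, so I deliberately avoid a Parseval equality and instead apply Fatou to the finite (hence exactly orthogonal) partial sums, which produces precisely the inequality direction required. Secondary technical points to dispatch are: finiteness of $C_0$, which follows from boundedness of $\Theta$ and the fact that $\theta_0$ is a support point, so $f_{g_0}$ has tails at least as heavy as $p_{\theta_0}$ in every direction; square-integrability $\int q_\alpha^2 f_{g_0}\dif\mu<\infty$ (a polynomial against Gaussian or Poisson tails); and the Fubini step in the lower bound, valid because $\Theta$ is bounded and $\int|q_\alpha|\,p_\theta\dif\mu$ is uniformly bounded in $\theta$.
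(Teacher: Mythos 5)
Your proof is correct and follows essentially the same route as the paper's: the lower bound via Fubini, Proposition~\ref{pro:polynomial}(ii) and Cauchy--Schwarz against individual $q_\alpha$, and the upper bound by factoring out $p_{\theta_0}$, paying the constant $C_0$, using orthogonality from Proposition~\ref{pro:polynomial}(i), and the identical Frobenius-norm conversion $\sum_{|\alpha|=k}(m_{\alpha,g}-m_{\alpha,g_0})^2/\alpha!=\|m_{k,g}-m_{k,g_0}\|_F^2/k!$. The only (immaterial) difference is the justification of the term-by-term integration in the upper bound: you apply Fatou's lemma to the squared partial sums, whereas the paper identifies the $L_2(p_{\theta_0}\dif\mu)$ limit of the partial sums with the pointwise limit from Lemma~\ref{lem:NUM-multi} via a Cauchy-sequence argument; both are valid, and yours is arguably the more direct way to get the needed inequality direction.
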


\begin{proof} 
The fact that $C_0<\infty$ is guaranteed by the fact that $\theta_0$ is a support point of $g_0$. To derive the upper bound, observe that
$$
\begin{aligned}
\chi^2(f_g,f_{g_0}) &= \int\left(\frac{f_g(x)}{f_{g_0}(x)}-1\right)^2 f_{g_0}(x) \dif\mu(x)
\\
&=\int\left(\frac{f_g(x)-f_{g_0}(x)}{p_{\theta_0}(x)}\right)^2\frac{p_{\theta_0}(x)}{f_{g_0}(x)} p_{\theta_0}(x) \dif\mu(x)
\\
&\le C_0\int\left(\sum_{k=1}^\infty\sum_{|\alpha|=k}\frac{m_{\alpha,g}-m_{\alpha,g_0}}{\alpha!}q_\alpha(x)\right)^2 p_{\theta_0}(x) \dif\mu(x)
\\
&\le C_0\sum_{k=1}^\infty\sum_{|\alpha|=k} a_\alpha\frac{(m_{\alpha,g}-m_{\alpha,g_0})^2}{\alpha!}\\
&\le C_0\sum_{k=1}^\infty\left(\sup_{|\alpha|=k}a_\alpha\right)\frac{\|m_{k,g}-m_{k,g_0}\|_F^2}{k!}, 
\end{aligned}
$$
where the third step follows from Lemma \ref{lem:NUM-multi} and the definition of $C_0$. The fourth inequality is justified by property (i) of Proposition \ref{pro:polynomial}. Indeed, if the sum in the fourth line is infinite, the upper bound becomes trivial. If the sum in the fourth line is finite, then by Proposition \ref{pro:polynomial} (i) the sequence 
$$
\left\{\sum_{k=1}^K\sum_{|\alpha|=k}\frac{m_{\alpha,g}-m_{\alpha,g_0}}{\alpha!}q_\alpha\right\}_{K\in\bN}
$$
forms a Cauchy sequence in $L_2(p_{\theta_0}\dif \mu)$ and we have for any fixed $K$, 
\[
\Big\|\sum_{k=1}^K\sum_{|\alpha|=k}\frac{m_{\alpha,g}-m_{\alpha,g_0}}{\alpha!}q_\alpha \Big\|_{L_2(p_{\theta_0}\dif \mu)}^2 = \sum_{k=1}^K\sum_{|\alpha|=k} a_\alpha\frac{(m_{\alpha,g}-m_{\alpha,g_0})^2}{\alpha!}.
\]
In this case the inequality is in fact an equality. Finally, the last inequality is a consequence of the fact that
$$
\|m_{k,g}-m_{k,g_0}\|_F^2=\sum_{|\alpha|=k}\frac{k!}{\alpha!}(m_{\alpha,g}-m_{\alpha,g_0})^2. 
$$

To prove the lower bound, applying property (ii) in Proposition \ref{pro:polynomial} and the Cauchy–Schwarz inequality, we have
\begin{align*}
|a_\alpha| |m_{\alpha,g}-m_{\alpha,g_0}|
&= \left|\int q_\alpha(x) f_g(x) \dif\mu(x)-\int q_\alpha(x)f_{g_0}(x) \dif\mu(x)\right|
\\
&= \left|\int q_\alpha(x)\left(\frac{f_g(x)}{f_{g_0}(x)}-1\right) f_{g_0}(x) \dif\mu(x)\right|
\\
&\le\chi(f_g,f_{g_0})\sqrt{\int q_\alpha^2(x) f_{g_0}(x) \dif\mu(x)}, 
\end{align*}
for any $\alpha\in\bN^d$. 
\end{proof}

The moment comparison lemma given below plays a central role in the proof of the theorem and may also be of independent interest. 

\begin{lemma}\label{lem:MCL-multi}
Let $g_0$ be a finite discrete distribution with $J$ support points. Fix some $g\in\cG$ and define 
$$
\Delta_g\coloneqq\max_{k\in[2J]}\|m_{k,g}-m_{k,g_0}\|_2. 
$$
Then, for any $k>2J$, 
\begin{equation}\label{eq:MCL-multi}
\|m_{k,g}-m_{k,g_0}\|_2\le k(M+1)^{2Jk}\Delta_g. 
\end{equation}
\end{lemma}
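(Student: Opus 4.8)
The plan is to reduce the tensor inequality \eqref{eq:MCL-multi} to the one-dimensional Lemma~\ref{lem:MCL} by exploiting the spectral-norm characterization \eqref{eq:SPE}. Since each $m_{k,g}$ is a symmetric tensor, so is the difference $m_{k,g}-m_{k,g_0}$, and hence $\|m_{k,g}-m_{k,g_0}\|_2=\sup_{c\in\bS^{d-1}}|\langle m_{k,g}-m_{k,g_0},c^{\otimes k}\rangle|$. The whole argument will therefore come down to bounding $|\langle m_{k,g}-m_{k,g_0},c^{\otimes k}\rangle|$ uniformly over unit vectors $c$ and then taking the supremum.

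The key observation is that contracting a moment tensor against $c^{\otimes k}$ collapses it to a one-dimensional moment. Indeed, for any $c\in\bS^{d-1}$,
\[
\langle m_{k,g},c^{\otimes k}\rangle=\int_\Theta\langle\theta-\theta_0,c\rangle^{k}\dif g(\theta),
\]
which is precisely the $k$-th moment, centered at $0$, of the pushforward measure $g^{c}$ of $g$ under the linear map $\theta\mapsto\langle\theta-\theta_0,c\rangle$. First I would record the relevant properties of the projected problem: the pushforward $g_0^{c}$ is supported on at most $J$ points (the images of the $J$ support points of $g_0$), one of which is $0$ since $\theta_0$ is a support point of $g_0$; and by Cauchy--Schwarz the projected range satisfies $\sup_{\theta\in\Theta}|\langle\theta-\theta_0,c\rangle|\le\sup_{\theta\in\Theta}\|\theta-\theta_0\|=M$. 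Thus the pair $(g^{c},g_0^{c})$ is exactly of the form covered by Lemma~\ref{lem:MCL} in dimension one, with the same $J$ and with the role of $M$ played by a quantity no larger than $M$.

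Applying Lemma~\ref{lem:MCL} to $(g^{c},g_0^{c})$ then gives, for every $k>2J$,
\[
|\langle m_{k,g}-m_{k,g_0},c^{\otimes k}\rangle|\le k(M+1)^{2Jk}\max_{j\in[2J]}|\langle m_{j,g}-m_{j,g_0},c^{\otimes j}\rangle|.
\]
Since $|\langle m_{j,g}-m_{j,g_0},c^{\otimes j}\rangle|\le\|m_{j,g}-m_{j,g_0}\|_2$ for each $j$ and every unit $c$, the maximum on the right is bounded by $\Delta_g$, uniformly in $c$. Taking the supremum over $c\in\bS^{d-1}$ on the left and invoking \eqref{eq:SPE} yields \eqref{eq:MCL-multi}. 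The one point requiring care --- the main obstacle --- is the invocation of Lemma~\ref{lem:MCL} for the projected null $g_0^{c}$, whose number of atoms may drop below $J$ and whose atoms vary with $c$: I would handle this by noting that the construction in the proof of Lemma~\ref{lem:MCL} (which relies on the polynomial $\prod_{j}(\cdot-\theta_j)^2$ vanishing on the support and on the bound $|\theta_0-\kappa_i|\le M$) goes through verbatim for any measure supported on at most $J$ points inside $[-M,M]$ with $0$ an atom, after padding the list of support points with repetitions if necessary. This keeps the constant $k(M+1)^{2Jk}$ independent of $c$, which is exactly what the final supremum over $c$ requires.
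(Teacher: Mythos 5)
Your proposal is correct and follows essentially the same route as the paper's own proof: both reduce to the univariate Lemma~\ref{lem:MCL} by using Banach's characterization \eqref{eq:SPE} of the spectral norm of the symmetric tensor $m_{k,g}-m_{k,g_0}$, identifying the contraction $\langle m_{k,g}-m_{k,g_0},c^{\otimes k}\rangle$ with moment differences of the pushforwards of $g,g_0$ under $\theta\mapsto\langle\theta-\theta_0,c\rangle$, and noting that the projected null has at most $J$ atoms in $[-M,M]$ so the one-dimensional bound applies uniformly in $c$. Your explicit treatment of the atom-collapse/padding issue is exactly the point the paper also flags ("Lemma~\ref{lem:MCL} still evidently applies when the number of support points of $g_0$ is at most $J$").
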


\begin{proof}[Proof of Lemma~\ref{lem:MCL-multi}]
This lemma is a direct generalization of Lemma~\ref{lem:MCL}. For any $k>2J$, we have
$$
\begin{aligned}
\|m_{k,g}-m_{k,g_0}\|_2&=\sup_{c\in\bS^{d-1}}|\langle m_{k,g}-m_{k,g_0},c^{\otimes k}\rangle|\\
&=\sup_{c\in\mathbb{S}^{d-1}}\left|\int\langle\theta-\theta_0,c\rangle^k\dif(g-g_0)(\theta)\right|\\
&\le k(M+1)^{2Jk}\sup_{c\in\mathbb{S}^{d-1}}\max_{k\in[2J]}\left|\int\langle\theta-\theta_0,c\rangle^k\dif(g-g_0)(\theta)\right|\\
&= k(M+1)^{2Jk}\Delta_g. 
\end{aligned}. 
$$
Here, the first and last equality follows from the equality \eqref{eq:SPE}. The inequality follows by applying Lemma~\ref{lem:MCL} to the distributions of $\langle\theta,c\rangle$ for $\theta \sim g_0$ and $\theta \sim g$, respectively. Note that if $g_0$ has $J$ support points, then the corresponding distribution of $\langle\theta,c\rangle$ has no more than $J$ support points, and Lemma~\ref{lem:MCL} still evidently applies when the number of support points of $g_0$ is at most $J$. Also, by the definition of $M$, $|\langle\theta,c\rangle-\langle\theta_0,c\rangle| \leq M$ for $c \in \bS^{d-1}, \theta \in \Theta$. 
\end{proof}

\bigskip

\begin{proof}[Proof of Theorem~\ref{thm:converge1}] 
We will apply Theorem~\ref{thm:ASY} with $f_0 = f_{g_0}$, $\cF = \{f_g: g \in \cG\}$. Assumption \ref{(A1)} is satisfied by definition. Regarding assumption \ref{(A2)}, a simple computation shows that $\|m_{k,g} - m_{k,g_0}\|_F^2 \leq 4d^kM^{2k}$. Thus by Lemma~\ref{lem:DEN-multi} 
\[
\chi^2(f_g,f_{g_0}) \leq 4C_0\sum_{k=1}^\infty\left(\sup_{|\alpha|=k}a_\alpha\right)\frac{d^kM^{2k}}{k!}, 
\]
which is finite under \ref{(C1)} (recall the values for $a_\alpha$ in Proposition~\ref{pro:polynomial}). It remains to verify assumption \ref{(A3)}. 

By Lemma \ref{lem:NUM-multi}, for any $g\in\cG$, we have
$$
\begin{aligned}
\frac{f_g}{f_{g_0}}-1&=\frac{p_{\theta_0}}{f_{g_0}}\left(\frac{f_g-f_{g_0}}{p_{\theta_0}}\right)
\\
&=\frac{p_{\theta_0}}{f_{g_0}}\left(\sum_{k=1}^\infty\sum_{|\alpha|=k}\frac{m_{\alpha,g}-m_{\alpha,g_0}}{\alpha!}q_\alpha\right)
\\
&=\sqrt{\frac{p_{\theta_0}}{f_{g_0}}}\left(\sum_{k=1}^\infty\sum_{|\alpha|=k}\frac{m_{\alpha,g}-m_{\alpha,g_0}}{\alpha!}q_\alpha\sqrt{\frac{p_{\theta_0}}{f_{g_0}}}\right). 
\end{aligned}
$$
Hence, the score set can be written as
$$
\cS= \left\{\sqrt{\frac{p_{\theta_0}}{f_{g_0}}}\sum_{k=1}^\infty\sum_{|\alpha|=k} c_{\alpha,g}h_\alpha:g\in\cG\backslash g_0\right\}, 
$$
where 
$$
c_{\alpha,g}\coloneqq\sqrt{\frac{a_\alpha (k+1)^d}{\alpha!}}\frac{|\alpha|(m_{\alpha,g}-m_{\alpha,g_0})}{\chi(f_g,f_{g_0})},\, h_\alpha\coloneqq\frac{q_\alpha}{|\alpha|\sqrt{a_\alpha (k+1)^d\alpha!}}\sqrt{\frac{p_{\theta_0}}{f_{g_0}}}. 
$$

Because $\sqrt{p_{\theta_0}/f_{g_0}}$ is uniformly bounded by Lemma~\ref{lem:DEN-multi}, Example 2.10.10 of \citet{van1996weak} implies that it suffices to verify the Donsker property and to identify an appropriate envelope function for the family 
$$
\left\{\sum_{k=1}^\infty\sum_{|\alpha|=k} c_{\alpha,g}h_\alpha:g\in\cG\backslash g_0\right\}. 
$$
According to Theorem 2.13.2 \citep{van1996weak}, this family is $f_{g_0} \dif \mu$-Donsker as long as: 
\begin{enumerate}[(a)]
\item $\{h_\alpha\}_{\alpha\in\bN^d}$ is an orthogonal sequence in $L_2(f_{g_0} \dif \mu)$ with $\sum_{k=1}^\infty\sum_{|\alpha|=k}\|h_\alpha\|_{L_2(f_{g_0}\dif \mu)}^2<\infty$. 
\item For any $g\in\cG\backslash g_0$, $\sum_{k=1}^\infty\sum_{|\alpha|=k} c_{\alpha,g}h_\alpha$ converges pointwise, and $\sup_{g\in \cG}\sum_{k=1}^\infty\sum_{|\alpha|=k} c_{\alpha,g}^2 < \infty$. 
\end{enumerate}
Condition (a) follows directly from the definition and property (i) in Proposition \ref{pro:polynomial}. To verify (b), we use the lower bound in Lemma \ref{lem:DEN-multi}. It states that
$$
\begin{aligned}
\chi^2(f_g,f_{g_0})&\ge\max_{k\in[2J]}\left(\min_{|\alpha|=k}\frac{a_\alpha^2}{\int q_\alpha^2(x)f_{g_0}(x)\dif\mu(x)}\right)\|m_{k,g}-m_{k,g_0}\|_\infty^2\\
&\ge\left(\min_{k\in[2J]}\frac{1}{d^k}\min_{|\alpha|=k}\frac{a_\alpha^2}{\int q_\alpha^2(x)f_{g_0}(x)\dif\mu(x)}\right)\Delta_g^2, 
\end{aligned}
$$
where $\Delta_g$ is defined as in Lemma \ref{lem:MCL-multi}. Invoking \eqref{eq:MCL-multi}, we then obtain
$$
\begin{aligned}
&\sum_{k=1}^\infty\sum_{|\alpha|=k}c_{\alpha,g}^2
\\
&=\sum_{k=1}^\infty\sum_{|\alpha|=k}\frac{a_\alpha (k+1)^dk^2(m_{\alpha,g}-m_{\alpha,g_0})^2}{\alpha!\chi^2(f_g,f_{g_0})}\\
&\le\sum_{k=1}^\infty\left(\sup_{|\alpha|=k}a_\alpha\right)\frac{(k+1)^dk^2\|m_{k,g}-m_{k,g_0}\|_F^2}{k!\chi^2(f_g,f_{g_0})}\\
&\le\left(\max_{k\in[2J]}d^k\max_{|\alpha|=k}\frac{\int q_\alpha^2(x)f_{g_0}(x)\dif\mu(x)}{a_\alpha^2}\right)\sum_{k=1}^\infty\left(\sup_{|\alpha|=k}a_\alpha\right)\frac{(k+1)^dk^2d^k\|m_{k,g}-m_{k,g_0}\|_2^2}{k!\Delta_g^2}\\
&\le C_1, 
\end{aligned}
$$
where
$$
\begin{aligned}
C_1&\coloneqq\left(\max_{k\in [2J]}d^k\max_{|\alpha|=k}\frac{\int q_\alpha^2(x)f_{g_0}(x)\dif\mu(x)}{a_\alpha^2}\right)\left(\sum_{k=1}^{2J}\left(\sup_{|\alpha|=k}a_\alpha\right)\frac{(k+1)^dk^2d^k}{k!}\right.\\
&+\left.\sum_{k=2J+1}^{\infty}\left(\sup_{|\alpha|=k}a_\alpha\right)\frac{(k+1)^dk^4d^k}{k!}(M+1)^{4Jk}\right)<\infty.  
\end{aligned}
$$
Finally, by the Cauchy-Schwarz inequality, 
$$
\left|\sum_{k=1}^\infty\sum_{|\alpha|=k} c_{\alpha,g}h_\alpha\right|\le\sqrt{\sum_{k=1}^\infty\sum_{|\alpha|=k} c_{\alpha,g}^2}\sqrt{\sum_{k=1}^\infty\sum_{|\alpha|=k} h_\alpha^2}\le\sqrt{C_1}\sqrt{\sum_{k=1}^\infty\sum_{|\alpha|=k} h_\alpha^2}. 
$$
This ensures the existence of an $f_{g_0}\dif\mu$-square integrable envelope for $\cS$. Consequently, assumption \ref{(A3)} is satisfied. 
\end{proof}

\subsection{Proof of Theorem \ref{thm:converge2}}

We apply Theorem \ref{thm:ASY} with $f_0=f_{g_0}$, $\cF=\{f_g:g\in\cG\}$. Assumption \ref{(A1)} is satisfied by definition. Since $f_{g_0}$ is fully supported, \ref{(A2)} can be readily verified. For any $f\in\cF\backslash f_{g_0}$, 
$$
\chi^2(f,f_{g_0})=\sum_{k=1}^Kf_{g_0}(k)\left(\frac{f(k)}{f_{g_0}(k)}-1\right)^2\in(0,\infty). 
$$
To confirm \ref{(A3)}, we note that any $f\in\cF$ can be written as 
$$
f(\cdot)=\left(1+\sum_{k=1}^K\frac{f(k)-f_{g_0}(k)}{f_{g_0}(k)}h_k(\cdot)\right)f_{g_0}(\cdot), 
$$
where $h_k(k')\coloneqq\mathbf{1}_{\{k'=k\}}$ for $1\le k,k'\le K$. This implies that
$$
\cS\subseteq\left\{\frac{\sum_{k=1}^Kc_kh_k}{\|\sum_{k=1}^Kc_kh_k\|_{L_2(f_{g_0}\dif\mu)}}:c\in\bS^{K-1}\right\}. 
$$
Because $f_{g_0}$ is fully supported, we have
$$
\inf_{c\in\bS^{K-1}}\left\|\sum_{k=1}^Kc_kh_k\right\|_{L_2(f_{g_0}\dif \mu)}=\min_{k\in [K]}\sqrt{f_{g_0}(k)}>0. 
$$
This ensures that $\cS$ is, after scaling, a subset of the convex hull of finitely many $f_{g_0}\dif\mu$-square integrable functions $\{h_k,k \in [K]\}$, therefore confirming that assumption \ref{(A3)} is satisfied. \hfill $\Box$

\subsection{Proof of Theorem~\ref{thm:ASY}} \label{sec:proofASY}

We begin by stating and proving several preliminary results.  The ``$\ge$'' direction of \eqref{eq:ASY} crucially depends on the ``star convexity'' of $\cF$ relative to $f_0$, as shown in the following lemma. 

\begin{lemma}\label{lem:LOW}
Under \ref{(A1)} and \ref{(A2)}, for any $s\in\cS$, it holds that
\begin{equation}\label{eq:LOW}
\sup_{f\in\cF}\ell_n(f)-\ell_n(f_0)\ge\frac{1}{2}[(\bG_n(s))_+]^2+o_{\bP}(1). 
\end{equation}
\end{lemma}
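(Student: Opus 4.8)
The plan is to exploit the star-convexity assumption \ref{(A1)} directly. Fix $s \in \cS$ and let $f \in \cF \setminus f_0$ be such that $s = s_f$, so that $r \coloneqq f/f_0 - 1 = \chi(f,f_0)\, s$. By \ref{(A1)} the one-parameter path $f_t \coloneqq (1-t)f_0 + t f$ lies in $\cF$ for every $t \in [0,1]$, hence
\[
\sup_{f' \in \cF}\ell_n(f') - \ell_n(f_0) \ge \ell_n(f_t) - \ell_n(f_0) = \sum_{i=1}^n \log\bigl(1 + t\, r(X_i)\bigr)
\]
for all such $t$, and the whole argument reduces to choosing $t$ well and lower-bounding this sum. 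Since \ref{(A2)} guarantees $\chi(f,f_0) \in (0,\infty)$, the score satisfies $\bE[s(X_1)] = 0$ and $\bE[s(X_1)^2] = 1$, so a single application of the central limit theorem gives $\bG_n(s) \Dkonv N(0,1)$, and in particular $\bG_n(s) = O_\bP(1)$; note that only these first two moments---and not the Donsker property \ref{(A3)}---are needed for this direction.

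Next I would make the data-dependent choice $t_n \coloneqq (\bG_n(s))_+ / (\sqrt{n}\,\chi(f,f_0))$, which is nonnegative and, because $\bG_n(s) = O_\bP(1)$, lies in $[0,1]$ with probability tending to one. Writing $u_i \coloneqq t_n\, r(X_i)$ and using $\sum_{i=1}^n s(X_i) = \sqrt{n}\,\bG_n(s)$ together with the law of large numbers $\tfrac1n\sum_{i=1}^n s(X_i)^2 \to 1$, a direct computation gives $\sum_i u_i = (\bG_n(s))_+\,\bG_n(s)$ and $\sum_i u_i^2 = (\bG_n(s))_+^2\,(1 + o_\bP(1))$. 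The quadratic heuristic $\log(1+u) \approx u - u^2/2$ then suggests $\sum_i \log(1+u_i) \approx \sum_i u_i - \tfrac12\sum_i u_i^2$, which equals $\tfrac12[(\bG_n(s))_+]^2 + o_\bP(1)$ on the event $\bG_n(s) \ge 0$; on $\{\bG_n(s) < 0\}$ we have $t_n = 0$ and the bound holds trivially since its right-hand side is then zero.

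The main obstacle is making the quadratic heuristic rigorous, since the individual terms $u_i$ need not be uniformly small: when $f(X_i)/f_0(X_i)$ is large, $u_i$ can be large and positive. The observation that resolves this is that $r(X_i) \ge -1$ forces $u_i \ge -t_n$, so all \emph{negative} $u_i$ are bounded below by $-t_n \to 0$, while the \emph{positive} $u_i$ are harmless for a lower bound. Concretely, I would split the sum: for $u_i \ge 0$ the elementary inequality $\log(1+u) \ge u - u^2/2$ (valid for all $u \ge 0$) discards a nonnegative remainder; for $u_i < 0$ the Lagrange form of the Taylor remainder, combined with $1 + u_i \ge 1 - t_n$ and $u_i \ge -t_n$, yields $\log(1+u_i) \ge u_i - u_i^2/2 - \tfrac{t_n}{3(1-t_n)^3} u_i^2$. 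Summing both parts gives
\[
\sum_{i=1}^n \log(1+u_i) \ge \sum_{i=1}^n u_i - \tfrac12\sum_{i=1}^n u_i^2 - \frac{t_n}{3(1-t_n)^3}\sum_{i=1}^n u_i^2,
\]
and since $t_n \to 0$ in probability while $\sum_i u_i^2 = O_\bP(1)$, the last term is $o_\bP(1)$.

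Finally I would assemble the pieces: substituting the computed values of $\sum_i u_i$ and $\sum_i u_i^2$ into the displayed lower bound and using $(\bG_n(s))_+^2 = O_\bP(1)$ to absorb the $o_\bP(1)$ factors yields $\sum_i \log(1+u_i) \ge \tfrac12[(\bG_n(s))_+]^2 + o_\bP(1)$, which combined with the first display proves \eqref{eq:LOW}. The only loose ends to tidy are the restriction $t_n \in [0,1]$ (handled by truncating at $1$, an event of vanishing probability) and the integrability needed for the central limit theorem and the law of large numbers, both of which follow from $\bE[s^2] = 1 < \infty$.
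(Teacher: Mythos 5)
Your proof is correct, and its skeleton is the paper's own: exploit \ref{(A1)} to build the segment from $f_0$ to $f$, evaluate the likelihood along it at the data-dependent mixing weight proportional to $(\bG_n(s))_+/\sqrt{n}$, and expand $\sum_{i}\log(1+u_i)$ to quadratic order (the paper parametrizes the path by $t\in[0,\tau]$ with $\tau=\chi(f,f_0)$ so that $f_t/f_0-1=ts$, but this is only a reparametrization of your $t\in[0,1]$, and your $t_n$ produces exactly the paper's perturbation $\tfrac{(\bG_n(s))_+}{\sqrt n}s(X_i)$). Where you genuinely depart from the paper is in the control of the Taylor remainder. The paper writes $\log(1+x)=x-x^2/2+x^2R(x)$ with $R(x)\to0$ as $x\to0$, and makes this uniform by proving $n^{-1/2}\max_{i\in[n]}|s(X_i)|=o_{\bP}(1)$ through a union bound and dominated convergence; your sign-splitting device---the global inequality $\log(1+u)\ge u-u^2/2$ valid for all $u\ge0$, and, for the negative terms, the floor $u_i\ge -t_n$ inherited from $f/f_0\ge0$ together with the Lagrange remainder---avoids any maximal inequality and is, for this one-sided purpose, more elementary and self-contained. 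The trade-off is that your one-sided bounds deliver only the lower bound, whereas the paper's two-sided expansion with the generic remainder $R$ and the maximal-inequality step is reused almost verbatim in the upper-bound half of the proof of Theorem~\ref{thm:ASY} (there with the envelope $S$ in place of a single score), so the extra work the paper invests here is amortized later. Your observation that only \ref{(A1)} and \ref{(A2)} are needed---square integrability of $s$ coming from $\chi(f,f_0)<\infty$, and the Donsker property \ref{(A3)} playing no role in this direction---matches the paper, whose Lemma~\ref{lem:LOW} is stated under exactly those two assumptions.
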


\begin{proof} 
By \ref{(A1)}, for any $s=s_f\in\cS$, there is an associated sub-model $\{f_t\}_{t\in[0,\tau]}\subseteq\cF$ given by
$$
f_t\coloneqq\left(1-\frac{t}{\chi(f,f_0)}\right)f_0+\frac{t}{\chi(f,f_0)}f, 
$$
where $\tau\coloneqq\chi(f,f_0) >0$ by \ref{(A2)}. Since
$$
\sup_{f\in\cF}\ell_n(f)-\ell_n(f_0)\ge\sup_{t\in[0,\tau]}\ell_n(f_t)-\ell_n(f_0), 
$$
it suffices to establish a lower bound for the right-hand side. Set $\hat{t}_n\coloneqq(\bG_n(s))_+$. By square integrability of $s$, $\hat{t}_n/\sqrt{n} = O_{\bP}(1/\sqrt{n}) = o_{\bP}(1)$ and thus
\begin{equation}\label{eq:hattn}
\bP(\hat t_n/\sqrt{n} \in [0,\tau]) \to 1.
\end{equation}
By the definition and a Taylor expansion, we get 
$$
\begin{aligned}
\sup_{t\in[0,\tau]}\ell_n(f_t)-\ell_n(f_0)&\ge \ell_n(f_{\frac{\hat{t}_n}{\sqrt{n}}})-\ell_n(f_0)+o_{\bP}(1)\\
&=\sum_{i=1}^n\log\left(1+\frac{\hat{t}_n}{\sqrt{n}}s(X_i)\right)+o_{\bP}(1)\\
&=\frac{\hat{t}_n}{\sqrt{n}}\sum_{i=1}^ns(X_i)-\frac{\hat{t}_n^2}{2n}\sum_{i=1}^ns^2(X_i)+\frac{\hat{t}_n^2}{n}\sum_{i=1}^ns^2(X_i)R\left(\frac{\hat{t}_n}{\sqrt{n}}s(X_i)\right)+o_{\bP}(1)\\
&=\frac{1}{2}[(\bG_n(s))_+]^2+\frac{\hat{t}_n^2}{n}\sum_{i=1}^ns^2(X_i)R\left(\frac{\hat{t}_n}{\sqrt{n}}s(X_i)\right)+o_{\bP}(1), 
\end{aligned}
$$
where $R$ is a deterministic function that satisfies $R(x)\to0$ as $x\to0$. Here, in the first line we used~\eqref{eq:hattn}. In the second line we used that $s_{f_t} = s_f$ for all $t \in [0,\tau]$ and $\|\tfrac{f_t}{f_0} -1\|_{L_2(f_0\dif \mu)} = t$. In the last line we used $\int s^2 f_0 \dif \mu=1$ along with the law of large numbers. For any $\eps>0$, the dominated convergence theorem (DCT) implies that, as $n\to\infty$, 
$$
\bP\left(\frac{1}{\sqrt{n}}\max_{i\in [n]}|s(X_i)|\ge\eps\right)\le n\bP\left(s^2(X_1)\ge n\eps^2\right) \leq \eps^{-2}\int_{\{x: s^2(x)>n\eps^2\}} s^2(x) f_0(x) \dif \mu(x)=o(1). 
$$
Hence, 
\[
\Big| \frac{\hat{t}_n^2}{n}\sum_{i=1}^ns^2(X_i)R\left(\frac{\hat{t}_n}{\sqrt{n}} s(X_i)\right)\Big| \leq \hat{t}_n^2 \Big(\frac{1}{n} \sum_{i=1}^n s^2(X_i)\Big) \max_{i\in [n]} \Big|R\left(\frac{\hat{t}_n}{\sqrt{n}} s(X_i)\right) \Big| = o_{\bP}(1), 
\]
since the argument of $R$ is $o_{\bP}(1)$ uniformly in $i$. Thus the proof is completed. 
\end{proof}

The ``$\le$'' direction of \eqref{eq:ASY} hinges on controlling the chi-square divergence at an $O_{\bP}(1/\sqrt{n})$ rate, as demonstrated by the following lemma.

\begin{lemma}\label{lem:CVR}
Under \ref{(A2)} and \ref{(A3)}, it holds that
\begin{equation}\label{eq:CVR}
\sup_{\text{$\substack{f\in\cF: \\ \ell_n(f)\ge \ell_n(f_0)}$}}\chi(f,f_0)=O_{\bP}\left(\frac{1}{\sqrt{n}}\right). 
\end{equation}
\end{lemma}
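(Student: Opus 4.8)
The plan is to combine a pointwise ``basic inequality'' for the log-likelihood with the empirical-process control supplied by \ref{(A3)}. Write $r_f \coloneqq f/f_0 - 1 = \chi(f,f_0)\,s_f$ and abbreviate $\sigma \coloneqq \chi(f,f_0)$, so that $s_f \in \cS$ with $\|s_f\|_{L_2(f_0\dif\mu)} = 1$ and $\bE[r_f(X)] = 0$. (On the supremum we may assume $f(X_i) > 0$, i.e. $r_f(X_i) > -1$, since otherwise $\ell_n(f) = -\infty$.) The starting point is the elementary inequality $\log(1+x) \le x - \tfrac{x^2}{2(1+x_+)}$, valid for all $x > -1$; it reduces to $x - \log(1+x) \ge \tfrac{x^2}{2(1+x)}$ for $x \ge 0$ and to $x-\log(1+x) \ge \tfrac{x^2}{2}$ for $-1 < x \le 0$, both of which follow by comparing derivatives. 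Applying it to each $\log(f/f_0)(X_i)$ and using $\ell_n(f) \ge \ell_n(f_0)$ together with $\bE[r_f] = 0$ yields the basic inequality
\[
\frac{1}{2n}\sum_{i=1}^n \frac{r_f^2(X_i)}{1+(r_f(X_i))_+} \;\le\; \frac{1}{n}\sum_{i=1}^n r_f(X_i) \;=\; \frac{\sigma}{\sqrt n}\,\bG_n(s_f) \;\le\; \frac{\sigma}{\sqrt n}\sup_{s\in\cS}|\bG_n(s)|.
\]

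Next I would pass to a fixed truncation set. Let $S$ denote the square-integrable envelope of $\cS$ from \ref{(A3)} and fix a level $T$; on $B_T \coloneqq \{x : S(x) \le T\}$ we have $(r_f)_+ \le \sigma S \le \sigma T$, so the left-hand side above is bounded below by $\tfrac{\sigma^2}{2(1+\sigma T)}\cdot \tfrac1n\sum_{X_i \in B_T} s_f^2(X_i)$. I would then show that the truncated empirical second moment is bounded away from zero uniformly: since $\int_{B_T^c} s_f^2\, f_0\dif\mu \le \int_{\{S>T\}} S^2\, f_0\dif\mu \to 0$ as $T \to \infty$ \emph{uniformly in $f$}, one can fix $T$ with $\int_{B_T} s_f^2\, f_0\dif\mu \ge \tfrac12$ for every $f$. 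The truncated class $\{s_f^2 \mathbf 1_{B_T} : s_f \in \cS\}$ is uniformly bounded by $T^2$ and is Glivenko--Cantelli (being a Lipschitz image, on the bounded range $[-T,T]$, of the Donsker, hence Glivenko--Cantelli, class $\cS$), so with probability tending to one $\tfrac1n\sum_{X_i\in B_T} s_f^2(X_i) \ge \tfrac14$ simultaneously for all $f$.

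Combining the two steps, and writing $W_n \coloneqq \sup_{s\in\cS}|\bG_n(s)| = O_\bP(1)$ (by \ref{(A3)} and the Donsker property), every $f$ with $\ell_n(f) \ge \ell_n(f_0)$ satisfies, on the high-probability event above,
\[
\frac{\sigma}{8(1+\sigma T)} \;\le\; \frac{W_n}{\sqrt n}.
\]
This is a self-bounding inequality in $\sigma$: rearranging gives $\sigma\bigl(1 - 8 T W_n/\sqrt n\bigr) \le 8 W_n/\sqrt n$, and since $W_n/\sqrt n = o_\bP(1)$ the factor $1 - 8TW_n/\sqrt n$ exceeds $\tfrac12$ with probability tending to one, yielding $\sigma \le 16 W_n/\sqrt n$ uniformly over the relevant $f$. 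Taking the supremum over all $f$ with $\ell_n(f)\ge\ell_n(f_0)$ gives $\sup \chi(f,f_0) = O_\bP(1/\sqrt n)$, which is the claim.

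I expect the main obstacle to be the uniform lower bound on the truncated empirical second moment in the second step. The delicate points are that the envelope integrability in \ref{(A3)} must be used to choose a single truncation level $T$ that works simultaneously for all $f$ (the uniformity in $f$ of the tail bound is exactly what makes this possible), and that one must upgrade $\cS$ from Donsker to a Glivenko--Cantelli statement for the squared, truncated class. The remaining ingredients---the pointwise logarithmic inequality and the final self-bounding manipulation---are then routine.
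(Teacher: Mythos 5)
Your proof is correct, but it takes a genuinely different route from the paper's. Both arguments start from a second-order upper bound on $\log(1+x)$, use \ref{(A3)} to control the linear term by $\sup_{s\in\cS}|\bG_n(s)|=O_{\bP}(1)$, and then need the quadratic (curvature) term to stay bounded away from zero uniformly in $f$; the difference is how that last point is handled. The paper uses the one-sided inequality $\log(1+x)\le x-x_-^2/2$, essentially Inequality 1.1 of \cite{gassiat2002likelihood}, so its curvature term is $\inf_{s\in\cS}\frac{1}{n}\sum_{i=1}^n[(s(X_i))_-]^2$ with coefficient exactly $\chi^2(f,f_0)$ and no damping factor; the bound then rearranges immediately into a ratio, but one must prove the non-obvious fact that the negative parts alone carry uniform $L_2(f_0\dif\mu)$ mass, $\inf_{s\in\cS}\int s_-^2 f_0\dif\mu>0$, which the paper establishes by a contradiction/uniform-integrability argument (if $\int (s_n)_-^2 f_0\dif\mu\to0$, centering forces $s_n\to 0$ in $L_1$, and the square-integrable envelope upgrades this to $L_2$, contradicting $\|s_n\|_{L_2(f_0\dif\mu)}=1$), together with the Glivenko--Cantelli property of $\{(s_-)^2:s\in\cS\}$. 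You instead keep the full square at the price of the damping factor $1/(1+x_+)$, and neutralize that factor by truncating at a fixed envelope level $T$ and closing a self-bounding inequality in $\sigma$. What your route buys: the population lower bound on the curvature is immediate from the normalization $\int s_f^2 f_0\dif\mu=1$ and the envelope tail, so the contradiction argument disappears. What it costs: the truncation and self-bounding bookkeeping, plus a Glivenko--Cantelli statement for $\{s_f^2\mathbf{1}_{B_T}\}$ whose justification needs one more step than your parenthetical suggests---elements of $\cS$ are not themselves bounded by $T$, so you should factor the map as $s\mapsto \min(s^2,T^2)\cdot\mathbf{1}_{B_T}$, a Lipschitz transformation of $\cS$ followed by multiplication with the fixed bounded function $\mathbf{1}_{B_T}$, both of which preserve the Glivenko--Cantelli property by the same preservation results the paper invokes from \cite{van1996weak}. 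That is a presentational fix, not a gap; the proof is sound as an alternative to the paper's argument.
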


\begin{proof} The first part of the proof essentially follows the steps of the proof of Inequality 1.1 in \cite{gassiat2002likelihood}. It is included here for completeness and adapted to our notation. Using the inequality $\log(1+x)\le x-x_-^2/2$ for $x \in (-1,\infty)$, where $x_-\coloneqq\max\{-x, 0\}$, we have for any $f \in \cF$ with $\ell_n(f)-\ell_n(f_0)\ge0$, 
$$
\begin{aligned}
0 \le \ell_n(f)-\ell_n(f_0)&=\sum_{i=1}^n\log\left(1+\chi(f,f_0)s_f(X_i)\right)\\
&\le\chi(f,f_0)\sum_{i=1}^ns_f(X_i)-\frac{1}{2}\chi^2(f,f_0)\sum_{i=1}^n[(s_f(X_i))_-]^2, 
\end{aligned}
$$
where we used he fact that $\chi(f,f_0)s_f(X_i) = \tfrac{f(X_i)}{f_0(X_i)}-1 > -1$. Thus we obtain
$$
\sqrt{n}\sup_{\text{$\substack{f\in\cF: \\ \ell_n(f)\ge \ell_n(f_0)}$}}\chi(f,f_0)\le2\sup_{f\in\cF\backslash f_0}\frac{\frac{1}{\sqrt{n}}\sum_{i=1}^ns_f(X_i)}{\frac{1}{n}\sum_{i=1}^n[(s_f(X_i))_-]^2}\le2\frac{\sup_{s\in\cS}\frac{1}{\sqrt{n}}\sum_{i=1}^ns(X_i)}{\inf_{s\in\cS}\frac{1}{n}\sum_{i=1}^n[(s(X_i))_-]^2}. 
$$
This is essentially Inequality 1.1 in \cite{gassiat2002likelihood} in our notation. The remaining proof follows ideas from the proof of Theorem 2.1 in \cite{gassiat2002likelihood}. 

The numerator in the upper bound is bounded in probability by the Donsker assumption in \ref{(A3)} after noting that $s(X_i)$ are centered. For the denominator, by Example 2.10.7 and Lemma 2.10.14 \citep{van1996weak}, the set $\{(s_-)^2:s\in\cS\}$ is $f_0 \dif \mu$-Glivenko-Cantelli. Moreover, we must have
$$
\inf_{s\in\cS}\int s_-^2 f_0 \dif \mu >0. 
$$
Otherwise, there would exist a sequence $\{s_n\}_{n\in\bN}\subseteq\cS$ with $\int (s_n)_-^2f_0 \dif \mu\to0$. Given $\int (s_n)_+f_0 \dif \mu-\int (s_n)_-f_0 \dif \mu=\int s_nf_0 \dif \mu=0$, $s_n$ converges to zero in $L_1(f_0\dif\mu)$. The envelope assumption in \ref{(A3)} implies that $s_n$ also converges in $L_2(f_0\dif \mu)$, contradicting $\int s_n^2f_0 \dif \mu=1$. As a result, the denominator is bounded away from zero in probability. Combining these observations yields \eqref{eq:CVR}. 
\end{proof}

\begin{proof}[Proof of Theorem \ref{thm:ASY}] 
We begin with the ``$\ge$'' direction of \eqref{eq:ASY}. By the Donsker assumption \ref{(A3)} and the discussion in Section 2.1.2 of \cite{van1996weak} the class $\cS$ is totally bounded in $L_2(f_0\dif \mu)$. Hence, for any $m>0$ we can find a finite $1/m$-net for $\cS$ with respect to this norm, say $\cS_m$. Throughout the proof, we abbreviate $[(\bG_n(s))_+]^2 = (\bG_n(s))_+^2$ to lighten the notation.

Fix an arbitrary $\eps>0$. By the union bound we obtain
$$
\bP\left(\sup_{f\in\cF}\ell_n(f)-\ell_n(f_0)\le\frac{1}{2}\max_{s\in\cS_m}(\bG_n(s))_+^2-\eps\right)
\le
\sum_{s\in\cS_m}\bP\left(\sup_{f\in\cF}\ell_n(f)-\ell_n(f_0)\le\frac{1}{2}(\bG_n(s))_+^2-\eps\right). 
$$
By Lemma \ref{lem:LOW}, the upper bound converges to zero as $n\to\infty$. To obtain the final result, observe the decomposition, 
$$
\begin{aligned}
\bP\left(\sup_{f\in\cF}\ell_n(f)-\ell_n(f_0)\le\frac{1}{2}\sup_{s\in\cS}(\bG_n(s))_+^2-\eps\right)
&\le
\bP\left(\sup_{f\in\cF}\ell_n(f)-\ell_n(f_0)\le\frac{1}{2}\max_{s\in\cS_m}(\bG_n(s))_+^2-\frac{\eps}{2}\right)
\\
&\quad+\bP\left(\frac{1}{2}\max_{s\in\cS_m}(\bG_n(s))_+^2\le\frac{1}{2}\sup_{s\in\cS}(\bG_n(s))_+^2-\frac{\eps}{2}\right). 
\end{aligned}
$$
The first term can be handled by the previous result for $\cS_m$. For the second term, note that
$$
\bP\left(\sup_{s\in\cS_m}(\bG_n(s))_+^2\le\sup_{s\in\cS}(\bG_n(s))_+^2-\eps\right)
\le
\bP\left(\sup_{\text{$\substack{s_1,s_2 \in \cS: \\ \|s_1-s_2
\|_2\le \frac{1}{m}}$}}\left|(\bG_n(s_1))_+^2-(\bG_n(s_2))_+^2\right|\ge\eps\right). 
$$
Next, observe that
\begin{align*}
\sup_{\text{$\substack{s_1,s_2 \in \cS: \\ \|s_1-s_2
\|_2\le\frac{1}{m}}$}} \left| (\bG_n(s_1))_+^2-(\bG_n(s_2))_+^2 \right|
\leq 2\left(\sup_{s \in \cS} |\bG_n(s)|\right) \left(\sup_{\text{$\substack{s_1,s_2 \in \cS: \\ \|s_1-s_2
\|_2\le\frac{1}{m}}$}} |\bG_n(s_1)-\bG_n(s_2)| \right).
\end{align*}
Consequently, by the fact that the sequence $\{\bG_n\}_{n\in\bN}$ is asymptotically uniformly $L_2(f_0\dif \mu)$-equicontinuous in probability (see Example 1.5.10 in \cite{van1996weak}), we obtain
\[
\lim_{m \to \infty}\limsup_{n\to\infty}\bP\left(\sup_{\text{$\substack{s_1,s_2 \in \cS: \\ \|s_1-s_2
\|_2\le \frac{1}{m}}$}}\left|(\bG_n(s_1))_+^2-(\bG_n(s_2))_+^2\right|\ge\eps\right) = 0. 
\]
Thus, the ``$\ge$'' direction of \eqref{eq:ASY} is established. 

To prove the ``$\le$'' direction of \eqref{eq:ASY}, we apply a Taylor expansion argument similar to that in the proof of Lemma \ref{lem:LOW}. Specifically, for any $f \in \cF$
$$
\begin{aligned}
\ell_n(f)-\ell_n(f_0)&=\sum_{i=1}^n\log\left(1+\chi(f,f_0)s_f(X_i)\right)\\
&=\chi(f,f_0)\sum_{i=1}^ns_f(X_i)-\frac{1}{2}\chi^2(f,f_0)\sum_{i=1}^ns_f^2(X_i)\\
&\quad +\chi^2(f,f_0)\sum_{i=1}^ns_f^2(X_i)R\left(\chi(f,f_0)s_f(X_i)\right), 
\end{aligned}
$$
where $R$ is a deterministic function and $R(x)\to0$ as $x\to0$. Let $S$ be an $f_0\dif \mu$-square integrable envelope for $\cS$. By the union bound and the DCT, we have for any fixed $\eps > 0$
\begin{align*}
\bP\left(\frac{1}{\sqrt{n}}\sup_{f\in\cF\backslash f_0}\max_{i\in [n]}|s_f(X_i)|\ge\eps\right)
&\le 
n\bP\left(S^2(X_1)\ge n\eps^2\right)
\\
&\leq \frac{1}{\eps^2}\int_{\{x: S^2(x)>n\eps^2\}} S^2(x) f_0(x) \dif \mu(x) =o(1), 
\end{align*}
as $n\to\infty$. Recall from Lemma \ref{lem:CVR} that $\sup_{f \in \cF: \ell_n(f) \geq \ell_n(f_0)} \chi(f,f_0) = O_{\bP}(1/\sqrt{n})$. Thus, defining 
\[
Y_n \coloneqq \Big(\sup_{\text{$\substack{f\in\cF: \\ \ell_n(f)\ge \ell_n(f_0)}$}} \chi(f,f_0)\Big)\Big( \sup_{f\in\cF\backslash f_0}\max_{i\in [n]}|s_f(X_i)|\Big) = o_{\bP}(1), 
\]
we obtain
$$
\begin{aligned}
&\sup_{\text{$\substack{f\in\cF\backslash f_0: \\ \ell_n(f)\ge \ell_n(f_0)}$}} \left|\chi^2(f,f_0)\sum_{i=1}^ns_f^2(X_i)R\left(\chi(f,f_0)s_f(X_i)\right)\right| 
\\
&\leq  \Big(n\sup_{\text{$\substack{f\in\cF: \\ \ell_n(f)\ge \ell_n(f_0)}$}}\chi^2(f,f_0)\Big) \left(\frac{1}{n} \sum_{i=1}^n S^2(X_i) \right) \sup_{|x| \leq Y_n} |R(x)| = o_{\bP}(1). 
\end{aligned}
$$
Noting further that $\cS^2$ is $f_0\dif\mu$-Glivenko-Cantelli since $\cS$ is $f_0\dif\mu$-Donsker under \ref{(A3)}, see Lemma 2.10.14 in \cite{van1996weak}, we have 
\[
\frac{1}{n} \sum_{i=1}^n s_f^2(X_i) = 1 + o_{\bP}(1), 
\]
uniformly in $f \in \cF$, we obtain
$$
\sup_{f\in\cF}\ell_n(f)-\ell_n(f_0)
=
\sup_{\text{$\substack{f\in\cF\backslash f_0: \\ \ell_n(f)\ge \ell_n(f_0)}$}}
\left(\chi(f,f_0)\sum_{i=1}^ns_f(X_i)-\frac{1}{2}n\chi^2(f,f_0)\right)+o_{\bP}(1). 
$$
Finally, maximizing the term inside the supremum the right-hand side over $\chi(f,f_0) \geq 0$, we obtain the upper bound, 
$$
\sup_{f\in\cF}\ell_n(f)-\ell_n(f_0)\le\frac{1}{2}\sup_{s\in\cS}(\bG_n(s))_+^2+o_{\bP}(1). 
$$
Combined with the lower bound established in the first part of the proof, this completes the argument. 
\end{proof}

\end{document}